%
%
%


\documentclass{amsart}
\usepackage{amssymb}
\usepackage{amsmath}
\usepackage{textcomp}
\usepackage{color}
\usepackage{graphicx}
\usepackage{pict2e}





\newtheorem{theorem}{Theorem}[section]
\newtheorem{lemma}[theorem]{Lemma}

\theoremstyle{definition}
\newtheorem{definition}[theorem]{Definition}
\newtheorem{prop}[theorem]{Proposition}

\newtheorem{corollary}[theorem]{Corollary}
\theoremstyle{remark}
\newtheorem{remark}[theorem]{Remark}

\newcommand{\ci}{{\ell}^{q(\cdot)}(L^{p(\cdot)})}
\newcommand{\pe}{L^{p(\cdot)}(\Omega)}
\newcommand{\p}{{p( \cdot )}}
\newcommand{\q}{{q( \cdot )}}

\numberwithin{equation}{section}

\begin{document}

\title[Some Remarks on the Vector-Valued spaces $\ci$]{Some Remarks on the Vector-Valued Variable Exponent Lebesgue Spaces $\ci$}

\author{Arash Ghorbanalizadeh}
\address{Department of Mathematics, Institute for Advanced Studies in Basic Sciences (IASBS), Zanjan 45137-66731, Iran}
\curraddr{}
\email{ghorbanalizadeh@iasbs.ac.ir}

\author{Reza Roohi Seraji}
\address{
Department of Mathematics, Institute for Advanced Studies in Basic Sciences (IASBS), Zanjan 45137-66731, Iran}
\curraddr{}
\email{rroohi@iasbs.ac.ir}

\subjclass[2010]{Primary 46E30, Secondary 42B35}

\keywords{variable mixed Lebesgue-sequence space, strict convexity, uniform convexity, measure convergence, approximation identity}

\date{}

\dedicatory{}

\commby{}


\maketitle

\begin{abstract}
In this paper, we investigate the geometric properties of the variable mixed Lebesgue-sequence space  $\ell^{q(\cdot)} (L^{p(\cdot)})$  as a Banach space. We show that, if $ 1<q_-,p_-,q_+,p_+<\infty $, then $\ell^{q(\cdot)} (L^{p(\cdot)})$ is strictly and uniformly convex. We also prove that, when $ 1\le q_-,p_-,q_+,p_+<\infty, $  the convergence in norm implies the convergence in measure, and under some conditions on exponents, the approximation identity holds in the space $ \ell^1(L^{\p\slash\q}) $.
\end{abstract}

\section{introduction}
Variable Lebesgue spaces are a type of function spaces that generalize the classical Lebesgue spaces by allowing the exponent $p$ to vary with the point $x$. These spaces were originally introduced by Orlicz \cite{WO} and are denoted by $L^{p(\cdot)}$. They have many properties in common with Lebesgue spaces, but they also exhibit some distinctive and subtle features. Variable Lebesgue spaces are not only interesting from a mathematical point of view, but they also have relevant applications in areas such as nonlinear elastic mechanics \cite{Zhikov2}, electrorheological fluids \cite{Ruzicka} or image restoration \cite{LiLiPi}. The reader can find detailed discussions of the properties of variable Lebesgue and Sobolev spaces in the recent books \cite{Cruz,Die} and the review papers \cite{di}, \cite{sam}, as well as the references therein.

There are also other types of spaces with variable exponents that have been studied. For example, Hardy spaces and Campanato spaces with variable exponents are discussed in \cite{Nakai}. Besov spaces with variable exponents are investigated in \cite{AlHa2010}. The variable exponent Besov space $B^{\alpha(\cdot)}_{p(\cdot),q(\cdot)}$ is defined by using the mixed Lebesgue-sequence space $\ell^{q(\cdot)} (L^{p(\cdot)})$. Some properties of the space $\ell^{q(\cdot)} (L^{p(\cdot)})$ are derived in \cite{AlHa2010, GG,GRS, KeVy}.

In this paper, we explore some geometric properties of the space $\ci$, such as strictly and uniformly convexity as well as the approximation identity, by using the property of reflexivity that was proved in \cite{GRS}.

Let us  revisit the notion of variable exponent Lebesgue spaces. We denote by $\mathcal{P}_0$ the set of measurable functions $p(\cdot): \mathbb{R}^n \rightarrow [c,\infty]$, where $c>0$, and by $\mathcal{P}$ the subset of $\mathcal{P}_0$ such that the range of its elements is contained in $[1,\infty]$.

We define $p_{+}= \mathop{\rm ess \; sup}\limits p$ and $p_{-}= \mathop{\rm ess \; inf}\limits p$ as the essential supremum and infimum of the variable exponent $p(\cdot)\in \mathcal{P}_0$. Let $\Omega_{\infty}=\{x\in \mathbb{R}^n: p(x)=\infty\}$ and $\Omega_0=\mathbb{R}^n\setminus \Omega_{\infty}$ be the subsets of $\mathbb{R}^n$ where the exponent is infinite and finite, respectively. The variable exponent Lebesgue space $L^{p(\cdot)}(\mathbb{R}^n)$ consists of all measurable functions $f$ such that there exists a positive constant $\lambda$ for which the modular

\[\varrho_{L^{p(\cdot)}}(\lambda^{-1}f)
:=\int_{\Omega_0}\left(\frac{|f(x)|}{\lambda}\right)^{p(x)}
dx
+\left\|\lambda^{-1}f\right\|_{L^{\infty}(\Omega_{\infty})}
\]
is finite. Given a variable exponent  $p(\cdot)\in \mathcal{P}_0$   and a function $f \in L^{p(\cdot)}(\mathbb{R}^n)$, we introduce the Luxemburg norm associated with the variable exponent Lebesgue space as
\[
\|f\|_{L^{p(\cdot)}}=\inf\{\lambda>0:  \varrho_{L^{p(\cdot)}}(\lambda^{-1}f)\leq 1\}.
\]

Next, we introduce the notion of the variable mixed spaces
$\ell^{q(\cdot)} (L^{p(\cdot)}(\mathbb{R}^n))$
following the definition in \cite{AlHa2010}. Let $p(\cdot), q(\cdot) \in \mathcal{P}_0$
and $(f_{\nu})_{\nu}$ be a sequence of measurable functions such that $f_\nu\in L^{p(\cdot)}(\mathbb{R}^n)$ for every $ \nu\in\mathbb{N} $. The modular of the variable mixed space is given by
\begin{align*}\label{P1}
\varrho_{{\ell}^{q(\cdot)}(L^{p(\cdot)})}\Big((f_{\nu})_{\nu}\Big)&:= \sum_{\nu=1}^{\infty}\inf\Big\{\lambda_{\nu}>0 : \varrho_{p(\cdot)}\left(\frac{f_{\nu}}{\lambda_{\nu}^{\frac{1}{q(\cdot)}}}\right)\le 1\Big\},
\end{align*}
where we adopt the convention $\lambda^{\frac{1}{\infty}} = 1$. When $ q_+<\infty $, this modular can also be expressed as
\begin{equation*}\label{P1a}
\varrho_{{\ell}^{q(\cdot)}(L^{p(\cdot)})}\Big((f_{\nu})_{\nu}\Big)=\underset{\nu}{\sum}\|f_{\nu}^{q(\cdot)}\|_{\frac{p(\cdot)}{q(\cdot)}}.
\end{equation*}
The variable mixed space  $\ell^{q(\cdot)} (L^{p(\cdot)}(\mathbb{R}^n))$ is then defined as
\begin{equation*}\label{P2}
{\ell}^{q(\cdot)}(L^{p(\cdot)}):=\Big\{f=(f_\upsilon)_\upsilon |~\exists\lambda>0~:~~
\varrho_{{\ell}^{q(\cdot)}(L^{p(\cdot)})}\left((f_{\upsilon})_{\upsilon}/\lambda\right)<\infty \Big\}.
\end{equation*}
If $p(\cdot), q(\cdot) \in \mathcal{P}_0$, then the space $\ell^{q(\cdot)} (L^{p(\cdot)}(\mathbb{R}^n))$ is
a quasi-normed space, i.e.,
\begin{align*}
\|f\|_{{\ell}^{q(\cdot)}(L^{p(\cdot)})}= \inf\left\{\mu>0 : \varrho_{{\ell}^{q(\cdot)}(L^{p(\cdot)})}\left(f/\mu\right)\le 1\right\},
\end{align*}
which is a quasi-norm on  $\ell^{q(\cdot)} (L^{p(\cdot)}(\mathbb{R}^n))$.
As shown in \cite[Theorem 1]{KeVy} and \cite[Theorem 3.6]{AlHa2010},
$\|\cdot\|_{{\ell}^{q(\cdot)}(L^{p(\cdot)})}$ becomes a norm, if  $p(\cdot), q(\cdot) \in \mathcal{P}$ satisfy one of the following conditions:
\begin{enumerate}
\item
$1 \le q(\cdot) \le p(\cdot) \le \infty $,
\item
$p(\cdot)\ge 1$
and $q(\cdot) \ge 1$ is constant,
\item
$\frac{1}{p(\cdot)} + \frac{1}{q(\cdot)}\leq 1 $.
\end{enumerate}
Often we need to assume some additional regularity on $ p(\cdot) $. There are two important continuity condition as follows \cite{Cruz},
\begin{itemize}
\item
We say that $ p(\cdot) $ is locally log-H\"{o}lder, and we write $ p(\cdot)\in LH_0(\mathbb{R}^n) $, if
\begin{align*}
\exists C_0\quad:\quad |x-y|<\frac{1}{2}\longrightarrow~|p(x)-p(y)|\leq\frac{C_0}{-\log (|x-y|)}
\end{align*}

\item
We say that $ p(\cdot) $ is log-H\"{o}lder continuous at infinity, and we write $ p(\cdot)\in LH_\infty(\mathbb{R}^n) $, if
\begin{align*}
\exists C_0,~\exists p_\infty\quad:\quad \forall x\in\mathbb{R}^n\longrightarrow~|p(x)-p_\infty|\leq\frac{C_\infty}{\log (e+|x|)}
\end{align*}
where $ p_\infty=\lim_{x\rightarrow\infty}p(x) $.
\end{itemize}
We will denote $ p(\cdot)\in LH(\mathbb{R}^n) $, if $ p(\cdot) $ is log-H\"{o}lder continuous locally and at infinity. Throughout this paper, inequalities for exponents are understood in the almost everywhere sense. Our results are true as long as the space $ \ci $ is a Banach space.

\begin{definition}\label{defi:220130}
 For a vector-valued sequence space $S(X)$
 over a reflexive Banach space $X,$ define its K\"othe dual with
respect to the dual pair $(X, X^*)$ (see
\cite{GKP}) as follows:
\[
S(X)':=\left\{\bar{\varphi}=(\varphi_\nu)_{\nu=1}^\infty \in {X^{*}}^{\mathbb{N}}: \mbox{for each}~~ \bar{f}
=(f_\nu)_{\nu=1}^\infty \in S(X), \sum_{\nu=1}^\infty |\varphi_\nu(f_\nu)|<\infty \right\}.
\]
The norm of $\bar{\varphi}=(\varphi_\nu)_{\nu=1}^\infty \in S(X)'$ is given by
\[
\|\bar{\varphi}\|_{S(X)'}:=
\sup\left\{
\sum_{\nu=1}^\infty |\varphi_\nu(f_\nu)|\,:\,\bar{f}
=(f_\nu)_{\nu=1}^\infty \in S(X),~
\|\bar{f}\|_{S(X)} \le 1
\right\}.
\]
\end{definition}

We recall that there is a canonical isomorphism between the dual and the K\"{o}the dual of space $\ci$. Indeed, by \cite[Lemma 2.4]{GRS} and \cite[Definition 2.2]{GRS}, the mapping
\begin{align}\label{Kothedual}
\psi:\ci'&\rightarrow(\ci)^*
\\
f'&\mapsto\psi_{f'}\qquad \text{s.t.}\quad\|f'\|_{\ci'}=\|\psi_{f'}\|_{(\ci)^*}\notag
\end{align}
is a canonical isomorphism, where $ \ci' $ denotes the K\"{o}the dual of space $\ci$, and for every $ g\in\ci $ we have
\begin{align*}
\psi_{f'}(g)=\sum_{\nu=1}^{\infty}\int_{\mathbb{R}}f_\nu'g_\nu dx\qquad:\qquad f_\nu'\in L^{p'(\cdot)}, g_\nu\in L^{p(\cdot)}
\end{align*}
where $ p'(\cdot) $ is conjugate exponent of $ p(\cdot) $.

To conclude, we give the definition of the maximal function and revisit its established boundedness within the variable exponent Lebesgue space \( L^{p(\cdot)} \).

\begin{definition}
Recall that the Hardy-Littlewood maximal operator $ \mathcal{M} $ is defined on $ L^1_{loc} $ by
\begin{align*}
\mathcal{M}f(x)=\sup_{r>0}\frac{1}{|B(x,r)|}\int_{B(x,r)}|f(y)|~dy
\end{align*}
where $ B(x, r) $ denotes the ball with center $ x\in\mathbb{R}^n $ and radius $ r > 0 $.
\end{definition}

In the last section, we need to invoke the following theorem
on the boundedness of the maximal operator in $L^{p(\cdot)}$;
see \cite[Theorem 3.16]{Cruz}.
\begin{theorem}\label{T1}
Given a set $\Omega$ and $p(\cdot) \in \mathcal{P}(\Omega),$ if $\frac{1}{p(\cdot)} \in \mbox{LH}(\Omega),$ then
$$
\|\mathcal{M} f\|_{L^{p(\cdot)}(\Omega)} \le C \, \|f\|_{L^{p(\cdot)}(\Omega)}
$$
for all $f \in L^{p(\cdot)}(\Omega)$.
\end{theorem}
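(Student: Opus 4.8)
The plan is to prove this by the now-standard method of Diening: reduce the norm inequality to a modular inequality, and then establish a pointwise estimate for $\mathcal{M}f$ in terms of the maximal function of a fixed power of $|f|$ plus an integrable tail. By homogeneity of both sides together with the unit-ball property of the Luxemburg norm, it suffices to produce a constant $C=C(n,p_-,p_+,\text{log-H\"{o}lder data})$ such that every $f$ with $\varrho_{L^{p(\cdot)}(\Omega)}(f)\le 1$ satisfies $\varrho_{L^{p(\cdot)}(\Omega)}(\mathcal{M}f/C)\le 1$. The set $\Omega_\infty=\{p=\infty\}$ is dealt with separately: there $\|f\|_{L^\infty(\Omega_\infty)}\le 1$, and writing $\mathcal{M}f\le\mathcal{M}(f\chi_{\Omega_\infty})+\mathcal{M}(f\chi_{\Omega_0})$ with $\mathcal{M}(f\chi_{\Omega_\infty})\le 1$ pointwise shows that this piece contributes nothing new. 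Extending $f$ by $0$ outside $\Omega$ (which only enlarges $\mathcal{M}f$), we may work on all of $\mathbb{R}^n$.

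The heart of the matter is the pointwise estimate: for $x\in\Omega$ and $f$ as above,
\[
\mathcal{M}f(x)^{p(x)}\;\le\;C\,\mathcal{M}\!\big(|f|^{p(\cdot)/p_-}\big)(x)^{p_-}\;+\;C\,(e+|x|)^{-m}
\]
for an exponent $m>n$ chosen in terms of the data. To obtain it one fixes $x$ and a ball $B=B(x,r)$ and estimates $\big(\tfrac{1}{|B|}\int_B|f|\big)^{p(x)}$ in two regimes. When $r$ is small, the local log-H\"{o}lder condition on $1/p(\cdot)$ yields $|B|^{-(p_+(B)-p_-(B))}\le C$, and combining this with Jensen's inequality applied with the exponent $p_-(B)$ converts the average of $|f|$ raised to $p(x)$ into a constant multiple of $\tfrac{1}{|B|}\int_B|f(y)|^{p(y)}\,dy$ plus a harmless additive error. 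When $r$ is large — and for the far-field part of any average — the log-H\"{o}lder condition at infinity lets one compare $p(y)$, $p(x)$ and $p_\infty$; using $\varrho_{L^{p(\cdot)}}(f)\le 1$ to dominate the part of $f$ exceeding $1$ in absolute value, and the explicit integrability of $y\mapsto(e+|y|)^{-p_-n}$ for the rest, one arrives at the decaying term $(e+|x|)^{-m}$.

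Granting the pointwise estimate, the conclusion is quick. Assuming $p_->1$, so that $\mathcal{M}$ is bounded on $L^{p_-}(\mathbb{R}^n)$, and noting that $\varrho_{L^{p(\cdot)}}(f)\le 1$ gives $\big\||f|^{p(\cdot)/p_-}\big\|_{L^{p_-}}^{p_-}=\int_{\mathbb{R}^n}|f(y)|^{p(y)}\,dy\le 1$, we integrate:
\[
\int_{\Omega}\mathcal{M}f(x)^{p(x)}\,dx\;\le\;C\int_{\mathbb{R}^n}\mathcal{M}\!\big(|f|^{p(\cdot)/p_-}\big)(x)^{p_-}\,dx+C\int_{\mathbb{R}^n}\frac{dx}{(e+|x|)^{m}}\;\le\;C'.
\]
Dividing $f$ by a fixed constant turns $C'$ into $1$, i.e. $\varrho_{L^{p(\cdot)}(\Omega)}(\mathcal{M}f/C')\le 1$, which is the desired norm bound.

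The main obstacle is the pointwise estimate, and within it the bookkeeping that simultaneously exploits $LH_0$ (for small balls, where $p$ is nearly constant) and $LH_\infty$ (for large balls and the tails), while keeping every constant independent of $f$. The borderline value $p_-=1$ is genuinely delicate, since $\mathcal{M}$ fails to be bounded on $L^1$; there one either assumes $p_->1$ — the case relevant to our applications — or replaces $p_-$ by a slightly larger exponent and reruns the same scheme, and it is precisely at this point that we simply appeal to \cite{Cruz}.
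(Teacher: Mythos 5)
The paper does not actually prove this statement: it is imported verbatim as an auxiliary tool, cited as \cite[Theorem 3.16]{Cruz}, so there is no in-paper argument to compare yours against. Your sketch is a faithful reconstruction of the standard Diening-type proof given in that reference: reduction to a modular inequality via homogeneity and the unit-ball property, separate treatment of $\Omega_\infty$, the key pointwise bound $\mathcal{M}f(x)^{p(x)} \lesssim \mathcal{M}\big(|f|^{p(\cdot)/p_-}\big)(x)^{p_-} + (e+|x|)^{-m}$ obtained from the $LH_0$ condition on small balls and the $LH_\infty$ condition for large balls and tails, and finally the $L^{p_-}$-boundedness of $\mathcal{M}$. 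So the approach is the right one.

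Two caveats. First, as written your text is an outline rather than a proof: the pointwise estimate, which you rightly identify as the heart of the matter, is described but not established, and essentially all of the analytic content of the theorem lives in that step (and you end by appealing to \cite{Cruz} anyway, which is exactly what the paper does). Second, your remark about the borderline case $p_-=1$ is on target and in fact exposes a slip in the paper's transcription: the hypothesis $p_->1$, which is present in the cited Theorem 3.16 and which your final step genuinely uses (boundedness of $\mathcal{M}$ on $L^{p_-}$), is omitted from the statement as reproduced here. Without it the conclusion is false in general — already for constant $p(\cdot)\equiv 1$ the maximal operator is not bounded on $L^1$ — so the statement should be read with that hypothesis restored.
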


\section{Strict Convexity}

In this section, we demonstrate the strict convexity of the space $\ci$ using its reflexivity property. The space $\ci$ is a mixed sequence space that has a complex structure and poses some challenges for working with it. We begin by recalling some basic definitions and the facts that are crucial for proving our result.

\begin{definition}\cite[Page 101]{LP}
A Banach space $ X $ is called strictly convex if the unit ball $ B(X) $ of $ X $ satisfies the following property: for any two distinct vectors $ x,y \in B(X) $,  $ |x + y| < 2 $.
\end{definition}

\begin{definition}\cite{LP}\label{def:Smooth}
A Banach space $ X $ is said to be Gateaux smooth (or smooth) if every point $ x \in S(X) $ on the unit sphere $ S(X) $ of $ X $ is smooth, i.e., for any $ y\in X $, the limit
\begin{align*}
\lim_{t\rightarrow 0}\frac{\|x+ty\|_X-\|x\|_X}{t}
\end{align*}
exists.
\end{definition}

\begin{definition}
Let $ X $ be a Banach space. We define the set-valued mapping $ J $ on $ S(X) $ as follows,
\begin{align*}
J(x)=\{x^*\in S(X^*):~\langle x^*,x\rangle=1 \}.
\end{align*}
\end{definition}
By the Hahn-Banach theorem, for any $ x\in S(X) $, the set $ J(x) $ is nonempty.

\begin{remark}\label{J:reflexivity}
Let $ 1<q_-,p_-,q_+,p_+<\infty $, then $ \ci $ is reflexive \cite[Theorem 1.1]{GRS}. The following mapping is a canonical isomorphism,
\begin{align*}
\Gamma:\ci &\longrightarrow(\ci)^{**}
\\
& f\mapsto\Gamma_{f}\qquad s.t.\qquad \|f\|_{\ci}=\|\Gamma_f\|.
\end{align*}
Then, for $ f^*\in (\ci)^* $, we have
\begin{align*}
J(f^*)&=\{f\in S((\ci)^{**}): \langle f,f^*\rangle =1\}
\\
&=\{~f\in S(\ci)~\quad: \langle f,f^*\rangle =1\}\subset S(\ci).
\end{align*}
\end{remark}

\begin{definition}\cite{LP}
Let $ X $ be a Banach space. The mapping $ J $ is said to be norm to $ weak^* $ continuous at $ x $, if $ (x_n)_n\subseteq S(X) $ such that $ lim_{n\rightarrow\infty} x_n = x $, and $ x_n^*\in J(x_n) $ for all $ n\in\mathbb{N} $, then
$ (x_n^*)_n $ converges $ weak^* $ to an element in $ J(x) $.
\end{definition}

\begin{theorem}\cite[Theorem 2.2.4]{LP}\label{theo:SC,Smooth}
Let $ X $ be a Banach space.
\begin{itemize}
\item[(1)] If $ X^* $ is strictly convex, then $ X $ is smooth.
\item[(2)] If $ X^* $ is smooth, then $ X $ is strictly convex.
\end{itemize}
\end{theorem}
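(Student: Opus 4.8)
The plan is to run the classical duality between smoothness of the norm and uniqueness of norming functionals. The only non-formal ingredient is a bridging lemma, which I would cite from convex analysis or establish at the outset: for any Banach space $Y$ and any $y\in S(Y)$, the norm of $Y$ is Gateaux differentiable at $y$ (in the sense of Definition~\ref{def:Smooth}) if and only if $J(y)$ is a singleton. The proof of this lemma is standard: for fixed $h$ the function $t\mapsto\|y+th\|$ is convex, so its one-sided derivatives at $0$ exist; a Hahn--Banach argument identifies the subdifferential of $\|\cdot\|_Y$ at $y$ with $J(y)$; and a convex function is Gateaux differentiable at a point exactly when its subdifferential there is a single functional (using that a convex set on which every continuous functional is constant is a singleton). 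I expect this lemma, and the care needed with the left/right derivatives, to be the main technical point --- everything after it is formal.

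For part (1), assume $X^*$ is strictly convex and fix $x\in S(X)$; by the lemma it suffices to show $J(x)$ is a singleton. If $x^*,y^*\in J(x)$, then $\big\langle\tfrac12(x^*+y^*),x\big\rangle=1$, so $\big\|\tfrac12(x^*+y^*)\big\|\ge1$, while $\|x^*\|=\|y^*\|=1$ forces $\big\|\tfrac12(x^*+y^*)\big\|\le1$; hence $\|x^*+y^*\|=2$, and strict convexity of $X^*$ yields $x^*=y^*$. Since $x\in S(X)$ was arbitrary, $X$ is smooth.

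For part (2), assume $X^*$ is smooth and suppose, for contradiction, that there are distinct $x,y\in S(X)$ with $\|x+y\|=2$. Set $z=\tfrac12(x+y)\in S(X)$ and pick, by Hahn--Banach, some $z^*\in J(z)\subseteq S(X^*)$. From $1=\langle z^*,z\rangle=\tfrac12\big(\langle z^*,x\rangle+\langle z^*,y\rangle\big)$ together with $\langle z^*,x\rangle\le1$ and $\langle z^*,y\rangle\le1$, I get $\langle z^*,x\rangle=\langle z^*,y\rangle=1$. Embed $x$ and $y$ into $X^{**}$ as $\widehat x,\widehat y$ via the canonical isometry; then $\widehat x,\widehat y\in S(X^{**})$ and $\langle\widehat x,z^*\rangle=\langle\widehat y,z^*\rangle=1$, so $\widehat x,\widehat y\in J(z^*)$ with respect to the pair $(X^*,X^{**})$. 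Applying the bridging lemma with $Y=X^*$, smoothness of $X^*$ at the point $z^*$ forces $J(z^*)$ to be a singleton, whence $\widehat x=\widehat y$ and therefore $x=y$, contradicting $x\ne y$. Hence $X$ is strictly convex. Note that this direction uses only that the canonical embedding $X\hookrightarrow X^{**}$ is isometric, not reflexivity of $X$.
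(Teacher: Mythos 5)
Your argument is correct and complete: the bridging lemma (Gateaux differentiability of the norm at $y\in S(Y)$ iff $J(y)$ is a singleton) together with the two duality computations is exactly the classical proof of this equivalence, as given in the cited source \cite[Theorem 2.2.4]{LP}. The paper itself states this theorem without proof, simply citing \cite{LP}, so there is no in-paper argument to compare against; your write-up supplies the standard one, including the correct observation that part (2) needs only the isometry of the canonical embedding $X\hookrightarrow X^{**}$, not reflexivity.
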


The main result of this section is the following theorem, which shows that the space $\ci$ is strictly convex, by using the Helly's theorem.

\begin{theorem}\label{theo:smooth}
Let $ 1<q_-,p_-,q_+,p_+<\infty $, then $ (\ci)^* $ is smooth.
\end{theorem}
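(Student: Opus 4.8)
The plan is to derive the smoothness of $(\ci)^*$ from the strict convexity of $\ci$ itself. Since $\ci$ is reflexive (Remark \ref{J:reflexivity}), we have $((\ci)^*)^*\cong(\ci)^{**}\cong\ci$ isometrically, so once $\ci$ is known to be strictly convex, Theorem \ref{theo:SC,Smooth}(1) applied with $X=(\ci)^*$ yields that $(\ci)^*$ is smooth. Equivalently — and this is the form in which Helly's theorem is most naturally invoked — it suffices to show that every $\varphi\in S((\ci)^*)$ attains its norm at a unique point of $S(\ci)$, which by reflexivity is precisely the single-valuedness of the duality map of $(\ci)^*$.

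So take $f=(f_\nu)_\nu$, $g=(g_\nu)_\nu$ in $S(\ci)$ with $\|f+g\|_{\ci}=2$; we must show $f=g$. By Hahn--Banach and reflexivity pick $\varphi\in S((\ci)^*)$ attaining its norm at $(f+g)/2$; since $\varphi\big((f+g)/2\big)=1$ while $|\varphi(f)|,|\varphi(g)|\le 1$, in fact $\varphi(f)=\varphi(g)=1$. Using the canonical identification \eqref{Kothedual}, write $\varphi=\psi_{f'}$ with $f'=(f'_\nu)_\nu\in\ci'$, $\|f'\|_{\ci'}=1$, so that
\[
\sum_{\nu=1}^\infty\int f'_\nu f_\nu\,dx\;=\;\sum_{\nu=1}^\infty\int f'_\nu g_\nu\,dx\;=\;1\;=\;\|f'\|_{\ci'}\,\|f\|_{\ci}\;=\;\|f'\|_{\ci'}\,\|g\|_{\ci}.
\]
Comparing with the supremum defining $\|\cdot\|_{\ci'}$ in Definition \ref{defi:220130}, both $f$ and $g$ are maximizers of $\bar k\mapsto\sum_\nu\big|\int f'_\nu k_\nu\,dx\big|$ over $B(\ci)$; in particular each $\int f'_\nu f_\nu\,dx$ and $\int f'_\nu g_\nu\,dx$ is real and nonnegative, and we have equality in every intermediate inequality — both in the sum over $\nu$ and inside each coordinate's $L^{p(\cdot)}$-Hölder estimate.

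The heart of the proof is to convert this global ``equality in Hölder'' into coordinatewise rigidity, and this is where Helly's theorem enters: by testing $\varphi$ together with its coordinate truncations against suitably chosen finite families and applying Helly's theorem to produce, for each $\varepsilon>0$, competitors in $\ci$ of norm $\le 1+\varepsilon$ with prescribed pairings, one forces, for every $\nu$ with $f'_\nu\neq 0$, equality in the $L^{p(\cdot)}$--Hölder inequality between $f'_\nu$ and $f_\nu$ (and between $f'_\nu$ and $g_\nu$), while $f_\nu=g_\nu=0$ on $\{f'_\nu=0\}$. Since $1<p_-\le p_+<\infty$, $L^{p(\cdot)}$ is strictly convex and smooth, so its duality map is single-valued, and equality in Hölder gives $f_\nu=c_\nu u_\nu$, $g_\nu=d_\nu u_\nu$ with a common unit direction $u_\nu\in S(L^{p(\cdot)})$ and scalars $c_\nu,d_\nu\ge 0$. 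Finally, because $q_+<\infty$ the norm-one and modular-one spheres of $\ci$ coincide, so substituting these expressions into $\varrho_{\ci}$ and using $\varrho_{\ci}(f)=\varrho_{\ci}(g)=\varrho_{\ci}\big((f+g)/2\big)=1$ reduces the remaining equality to the strict convexity of the outer variable-exponent sequence structure (valid since $1<q_-\le q_+<\infty$), which forces $c_\nu=d_\nu$ for all $\nu$. Hence $f=g$, $\ci$ is strictly convex, and $(\ci)^*$ is smooth.

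I expect the middle step to be the main obstacle. The variable-exponent Hölder inequality is not sharp with constant one, so ``equality in Hölder'' must be formulated and exploited through the associate (Köthe-dual) norm rather than the $L^{p'(\cdot)}$ norm, and passing from the single global identity $\sum_\nu\int f'_\nu f_\nu\,dx=\|f'\|_{\ci'}$ to equality in each of the infinitely many coordinates is exactly what requires Helly's theorem. By contrast, the modular/norm comparison, and the strict convexity and smoothness of the building blocks $L^{p(\cdot)}$ and of the outer $\ell^{q(\cdot)}$-type space, are standard once $1<p_-,p_+,q_-,q_+<\infty$.
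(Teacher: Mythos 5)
Your overall reduction is legitimate and runs in the opposite direction from the paper's: you propose to prove strict convexity of $\ci$ directly and then obtain smoothness of $(\ci)^*$ from Theorem \ref{theo:SC,Smooth}(1) applied to $X=(\ci)^*$, using reflexivity to identify $((\ci)^*)^*$ with $\ci$. The paper instead proves smoothness of $(\ci)^*$ first — by showing the duality map of $(\ci)^*$ is norm-to-weak$^*$ continuous, where Helly's theorem is used only in its weak$^*$ sequential compactness form (bounded sequences in the dual of a separable space have weak$^*$ convergent subsequences, applied to the norming elements $f_n\in J(f_n^*)\subset S(\ci)$) — and then deduces strict convexity of $\ci$ as a corollary via Theorem \ref{theo:SC,Smooth}(2). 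The paper's route is entirely ``soft'': it never touches the coordinate structure of $\ci$.

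The problem is that the step you yourself flag as ``the heart of the proof'' is not actually carried out, and it is genuinely the hard part. From the single identity $\sum_\nu\int f'_\nu f_\nu\,dx=1=\|f'\|_{\ci'}\|f\|_{\ci}$ you assert ``equality in every intermediate inequality — both in the sum over $\nu$ and inside each coordinate's $L^{p(\cdot)}$-Hölder estimate.'' But for variable $q(\cdot)$ the norm of $\ci$ is \emph{not} an iterated norm $\|(\|f_\nu\|_{\pe})_\nu\|_{\ell^{q(\cdot)}}$ (that identity, \cite[Proposition 3.3]{AlHa2010}, is only available for constant $q$, which is exactly why Proposition \ref{propNotUC}(1) is stated separately); the modular is an infimum over auxiliary parameters $\lambda_\nu$, and its Köthe dual norm from Definition \ref{defi:220130} does not visibly decompose as a sum of coordinatewise Hölder pairings with a chain of intermediate inequalities in which one could ``have equality.'' So there is no established inequality chain to saturate, and the subsequent claims ($f_\nu$ and $g_\nu$ share a unit direction for every $\nu$ with $f'_\nu\neq 0$, and vanish where $f'_\nu=0$) are unsupported. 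The appeal to Helly's theorem here is also not the theorem the paper cites (weak$^*$ sequential compactness); producing ``competitors of norm $\le 1+\varepsilon$ with prescribed pairings'' is a different device, and even granting it, you would still need a quantitative coordinatewise Hölder-type inequality for the mixed norm to extract the claimed rigidity. Finally, the last step (``strict convexity of the outer variable-exponent sequence structure'') is also delicate: the outer structure is the $\ell^1$-type sum $\sum_\nu\||f_\nu|^{q(\cdot)}\|_{\p/\q}$, and $\ell^1$ is not strictly convex, so the forcing of $c_\nu=d_\nu$ must come from the pointwise strict convexity of $t\mapsto t^{q(x)}$ together with strict monotonicity of $\|\cdot\|_{\p/\q}$; this is plausible but again not proved. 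As it stands the proposal is a program with its central step missing, not a proof.
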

\begin{proof}
According to \cite[Theorem 2.2.2]{LP}, it suffices to show that at every point $ f^*\in(\ci)^* $, the mapping $ J $ is a norm to $ weak^* $ continuous at $ f^* $.

Let $ (f_n^*)_n\subseteq S((\ci)^*):\lim_{n\rightarrow\infty}f_{n}^*=f^* $ and $ f_n\in J(f_n^*) $ for all $ n\in\mathbb{N} $. On the other side, since $ \ci $ is a reflexive Banach space \cite{GRS} and it is also separable \cite{GG}, then $ (\ci)^* $ is separable too.
 
Therefore, $ (\ci)^* $ is separable and
by Remark \ref{J:reflexivity}. $ (f_n)_n\subseteq S(\ci) $ is a bounded sequence in $ (\ci)^{**}\approx\ci $, then by Helly's theorem \cite[Theorem 27.11]{Helly'sTheorem}, it has a $ weak^* $ convergent subsequence like $ (f_{n_i})_{n_i} $. Then,
\begin{align*}
\exists f\in \ci\quad s.t.\quad f_{n_i}\overset{w^*}{\longrightarrow }f,
\end{align*}
by considering the reflexivity of the space,
\begin{align*}
 f_{n_i}\overset{w}{\longrightarrow }f,
\end{align*}
then
\begin{align*}
\|f\|_{\ci}\leq\limsup_{n_i\to\infty}\|f_{n_i}\|_{\ci}=1.
\end{align*}
We have, $ \|f_{n_i}^*-f^*\|_{(\ci)^*}\rightarrow 0 $ then 
\begin{align*}
\|f^*\|_{(\ci)^*}=1,
\end{align*}
 and by \eqref{Kothedual} for $ f\in S(\ci) $,
\begin{align*}
f_{n_i}\overset{w}{\longrightarrow }f\Leftrightarrow~\langle f,f^*\rangle &=\lim_{n_i\rightarrow\infty}\langle f_{n_i},f^*\rangle \qquad:\qquad f^*\in(\ci)^*
\\
&=\lim_{n_i\rightarrow\infty}\langle f_{n_i},f'\rangle ~\qquad:\qquad f'\in\ci'
\\
&=\lim_{n_i\rightarrow\infty}\langle f_{n_i},f_{n_i}'\rangle \qquad:\qquad\|f_{n_i}'-f'\|_{\ci'}\rightarrow 0
\\
&=\lim_{n_i\rightarrow\infty}\langle f_{n_i},f_{n_i}^*\rangle \qquad:\qquad \langle f_{n_i},f_{n_i}^*\rangle =1
\\
&~=\lim_{n_i\rightarrow\infty} 1=1.
\end{align*}
then
\begin{align*}
\|f\|_{\ci}=\|\Gamma_f\|=\sup_{\|f^*\|_{(\ci)^*}\leq 1}\langle f,f^*\rangle =1.
\end{align*}
Therefore,
\begin{align*}
 f\in S(\ci)\quad:\quad \langle f,f^*\rangle =1,
\end{align*}
then $ f\in J(f^*) $, which is the desired result.
\end{proof}
\begin{corollary}
Let $ 1<q_-,p_-,q_+,p_+<\infty $, then $ \ci $ is strictly convex.
\end{corollary}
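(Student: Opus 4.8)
The corollary follows immediately from Theorem \ref{theo:smooth} combined with part (2) of Theorem \ref{theo:SC,Smooth}. The plan is simply to chain these two facts: Theorem \ref{theo:smooth} establishes that $(\ci)^*$ is smooth under the hypothesis $1<q_-,p_-,q_+,p_+<\infty$, and Theorem \ref{theo:SC,Smooth}(2) says that whenever the dual $X^*$ of a Banach space $X$ is smooth, then $X$ itself is strictly convex. Taking $X=\ci$ — which is a Banach space under the stated exponent conditions, as noted in the introduction and in Remark \ref{J:reflexivity} — we conclude that $\ci$ is strictly convex.

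Concretely, the argument runs as follows. First I would note that the hypotheses of the corollary are exactly those of Theorem \ref{theo:smooth}, so we may apply it to obtain that $(\ci)^*$ is a smooth Banach space. Next, I would invoke Theorem \ref{theo:SC,Smooth}(2) with $X=\ci$ and $X^*=(\ci)^*$: since $X^*$ is smooth, $X$ is strictly convex. There is essentially nothing to check beyond the applicability of the cited theorem — in particular, that $\ci$ is genuinely a Banach space (not merely a quasi-normed space) under the condition $1<q_-,p_-,q_+,p_+<\infty$, which holds because this range of exponents falls under the normability conditions recorded after the definition of the space (for instance, it is covered by the reflexivity statement of Remark \ref{J:reflexivity}, which presupposes the Banach space structure).

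There is no real obstacle here; the mathematical content is entirely contained in Theorem \ref{theo:smooth}, whose proof has already been given. The only minor point worth a sentence of care is making sure the direction of the duality is correct — one needs the dual to be smooth in order to conclude the predual (here the original space) is strictly convex, and this is precisely what part (2) of Theorem \ref{theo:SC,Smooth} provides, so no subtlety arises.

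\begin{proof}
By Theorem \ref{theo:smooth}, under the hypothesis $1<q_-,p_-,q_+,p_+<\infty$ the dual space $(\ci)^*$ is smooth. Applying Theorem \ref{theo:SC,Smooth}(2) with $X=\ci$, whose dual $X^*=(\ci)^*$ is smooth, we conclude that $\ci$ is strictly convex.
\end{proof}
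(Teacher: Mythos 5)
Your proposal is correct and follows exactly the same route as the paper: invoke Theorem \ref{theo:smooth} to get smoothness of $(\ci)^*$ and then apply Theorem \ref{theo:SC,Smooth}(2) with $X=\ci$. Your version is in fact slightly more careful than the paper's in spelling out which part of Theorem \ref{theo:SC,Smooth} is used and why the direction of the duality is the right one.
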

\begin{proof}
Since $ \ci $ is a Banach space, then by Theorem \ref{theo:SC,Smooth} and Theorem \ref{theo:smooth} we have the desired result.
\end{proof}

\section{Uniform Convexity}

Here, we examine if the variable mixed Lebesgue-sequence space $ \ell^{q(\cdot)}(L^{p(\cdot)}) $ is uniformly convex. We find that uniform convexity is not always a feature of this space, especially when $ q=\infty $ or $ q=1 $.

Let's review the concepts associated with uniform convexity.

\begin{definition}\label{defUC}
A Banach space $X$ is said to be uniformly convex if for all $\varepsilon\in(0,2]$ there exists $\delta>0$ such that
\[
\|\frac{x+y}{2}\|\le 1-\delta
\]
whenever $ x,y \in S(X) $ satisfies $\|x-y\|\ge \varepsilon$.
\end{definition}
\begin{definition}
A normed vector space X is said to be uniformly smooth if every $ \varepsilon>0 $ there exists $ \delta>0 $ such that if $ x,y\in X $ with $ \|x\|_{X}=1 $ and $ \|y\|_{X}\leq\delta $ then
\[\|x+y\|_{X}+\|x-y\|_{X}\leq 2+\varepsilon\|y\|_{X}.\]
\end{definition}

Recall that a Banach space $ X $ is uniformly smooth if for any $x$ in unit sphere $ S(X) $ and $ y $ in $ X $, the limit
\begin{align}\label{T2}
\lim_{t\to 0}\Phi_{x,y}(t):=\lim\limits_{t \to 0} \frac{\|x-ty\|_{X}-\|x\|_{X}}{t}
\end{align}
exists and is uniform in $x, y \in S(X)$  \cite{LP}.

In the following proposition, we will see that the question of whether the space is uniformly convex can have either a positive or negative answer, depending on specific cases.
\begin{prop}\label{propNotUC}
Let $ p(\cdot)\in\mathcal{P} $. The following statements are true.
\begin{itemize}
\item[1.] If $q \in (1,\infty)$ is a constant and $1<p_{-} \le p_{+} < +\infty$, then the space ${\ell}^{q}(L^{p(\cdot)})$ is uniformly convex.
\item[2.] The space $\ell^{\infty}(L^{p(\cdot)})$ is not uniformly convex.
\item[3.] The space $ \ell^1(L^{p(\cdot)}) $ is not uniformly convex.
\end{itemize}
\end{prop}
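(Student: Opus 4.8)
The three statements are essentially independent, so the plan is to treat them one at a time, starting with (1), which is the only one that needs real work; (2) and (3) will be short explicit examples. For (1), the first observation is that when $q$ is a finite constant the second form of the modular (valid since $q_{+}<\infty$), together with the elementary identity $\|g^{q}\|_{p(\cdot)/q}=\|g\|_{p(\cdot)}^{q}$ for $g\ge 0$, collapses everything to
\[
\varrho_{\ell^{q}(L^{p(\cdot)})}\big((f_{\nu})_{\nu}\big)=\sum_{\nu}\|f_{\nu}\|_{p(\cdot)}^{q},
\qquad
\big\|(f_{\nu})_{\nu}\big\|_{\ell^{q}(L^{p(\cdot)})}=\Big(\sum_{\nu}\|f_{\nu}\|_{p(\cdot)}^{q}\Big)^{1/q},
\]
so that $\ell^{q}(L^{p(\cdot)})$ is, isometrically, the $\ell^{q}$-sum $\Big(\bigoplus_{\nu}L^{p(\cdot)}\Big)_{\ell^{q}}$ of countably many copies of $L^{p(\cdot)}$.

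With this reduction in hand the argument has two steps. First, I would recall that $L^{p(\cdot)}$ is uniformly convex whenever $1<p_{-}\le p_{+}<\infty$; this rests on the variable-exponent Clarkson-type inequalities and on the fact that, for exponents bounded and bounded away from $1$, the modular and the Luxemburg norm are uniformly comparable on the unit ball, and I would simply cite it (see \cite{Cruz}). Second, I would prove the classical fact that for $1<q<\infty$ the $\ell^{q}$-sum of uniformly convex spaces sharing one modulus of convexity is again uniformly convex. For this second step I would use the sequential criterion: it suffices to show that if $x^{(n)}=(x^{(n)}_{\nu})_{\nu}$ and $y^{(n)}=(y^{(n)}_{\nu})_{\nu}$ lie in the closed unit ball of $\ell^{q}(L^{p(\cdot)})$ and $\|x^{(n)}+y^{(n)}\|\to 2$, then $\|x^{(n)}-y^{(n)}\|\to 0$. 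Setting $a^{(n)}_{\nu}=\|x^{(n)}_{\nu}\|_{p(\cdot)}$ and $b^{(n)}_{\nu}=\|y^{(n)}_{\nu}\|_{p(\cdot)}$, the chain $\|x^{(n)}+y^{(n)}\|\le\big\|(a^{(n)}_{\nu}+b^{(n)}_{\nu})_{\nu}\big\|_{\ell^{q}}\le\|x^{(n)}\|+\|y^{(n)}\|\le 2$ forces, in the limit, that $\|(a^{(n)}_{\nu})_{\nu}\|_{\ell^{q}}\to 1$, $\|(b^{(n)}_{\nu})_{\nu}\|_{\ell^{q}}\to 1$, $\|(a^{(n)}_{\nu}+b^{(n)}_{\nu})_{\nu}\|_{\ell^{q}}\to 2$, and also that the ``triangle defect'' $\big(a^{(n)}_{\nu}+b^{(n)}_{\nu}-\|x^{(n)}_{\nu}+y^{(n)}_{\nu}\|_{p(\cdot)}\big)_{\nu}$ tends to $0$ in $\ell^{q}$ (here one uses $(s+t)^{q}\ge s^{q}+t^{q}$ for $s,t\ge 0$, $q\ge 1$, so that a dominated nonnegative sequence whose $\ell^{q}$-norm converges up to that of the dominating one must converge in $\ell^{q}$). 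Uniform convexity of scalar $\ell^{q}$ then gives $\big\|(a^{(n)}_{\nu}-b^{(n)}_{\nu})_{\nu}\big\|_{\ell^{q}}\to 0$.

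The final step is to feed the coordinatewise smallness of the triangle defect and of $|a^{(n)}_{\nu}-b^{(n)}_{\nu}|$ into the modulus of convexity of $L^{p(\cdot)}$ — after replacing $y^{(n)}_{\nu}$ by the vector with the same $L^{p(\cdot)}$-norm as $x^{(n)}_{\nu}$, the error being controlled by $|a^{(n)}_{\nu}-b^{(n)}_{\nu}|$ — to conclude $\big\|(\|x^{(n)}_{\nu}-y^{(n)}_{\nu}\|_{p(\cdot)})_{\nu}\big\|_{\ell^{q}}\to 0$, i.e.\ $\|x^{(n)}-y^{(n)}\|\to 0$. This coordinatewise fusion of the scalar convexity gain of $t\mapsto t^{q}$ with the $L^{p(\cdot)}$-modulus is the step I expect to be the main obstacle; everything preceding it (and the uniform convexity of $L^{p(\cdot)}$ itself, which I take from the literature) is comparatively routine.

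For (2) and (3), both are negative results obtained from the same two-point construction. Fix a measurable $A\subseteq\mathbb{R}^{n}$ with $0<|A|<\infty$ and put $h=\chi_{A}/\|\chi_{A}\|_{p(\cdot)}$, so $h\in L^{p(\cdot)}$ with $\|h\|_{p(\cdot)}=1$. For $q=\infty$ the norm reduces directly (via the convention $\lambda^{1/\infty}=1$) to $\big\|(f_{\nu})_{\nu}\big\|_{\ell^{\infty}(L^{p(\cdot)})}=\sup_{\nu}\|f_{\nu}\|_{p(\cdot)}$; taking $x=(h,h,0,0,\dots)$ and $y=(h,-h,0,0,\dots)$ gives $\|x\|=\|y\|=1$, $\|x-y\|=\|(0,2h,0,\dots)\|=2$, and $\|\tfrac{x+y}{2}\|=\|(h,0,0,\dots)\|=1$. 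For $q=1$ the norm reduces to $\big\|(f_{\nu})_{\nu}\big\|_{\ell^{1}(L^{p(\cdot)})}=\sum_{\nu}\|f_{\nu}\|_{p(\cdot)}$; taking $x=(h,0,0,\dots)$ and $y=(0,h,0,\dots)$ gives $\|x\|=\|y\|=1$, $\|x-y\|=2$, and $\|\tfrac{x+y}{2}\|=1$. In either case $\varepsilon=2$ admits no $\delta>0$ in Definition \ref{defUC}, so the space fails to be uniformly convex. The only preliminary point is that the ambient space is a genuine normed space: for $q=\infty$ this is condition (3) of the list of sufficient conditions (since $p(\cdot)\ge 1$ gives $\tfrac{1}{p(\cdot)}+\tfrac{1}{q(\cdot)}\le 1$), and for $q=1$ it is condition (1) or (2); the stated norm formulas follow at once from the definition of the modular once $q$ is a constant, respectively $q\equiv\infty$.
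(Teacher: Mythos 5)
Your proposal is correct and follows essentially the same route as the paper: for part (1) you use the same norm identity $\|(f_\nu)_\nu\|_{\ell^q(L^{p(\cdot)})}=\big\|(\|f_\nu\|_{L^{p(\cdot)}})_\nu\big\|_{\ell^q}$ to identify the space with the $\ell^q$-sum of copies of the uniformly convex space $L^{p(\cdot)}$, and for parts (2) and (3) you give the same kind of explicit two-element counterexamples. The only divergence is that the paper simply cites Day's theorem (M.~Day, 1941) for the stability of uniform convexity under $\ell^q$-sums, whereas you sketch its proof and leave the final ``fusion'' step admittedly unfinished; since that lemma is classical and citable, this does not affect the correctness of the argument, but citing it (as the paper does) is the cleaner option.
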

\begin{proof}
We prove each statement separately.
\begin{itemize}
\item[1.]
By \cite[Proposition 3.3]{AlHa2010}, we have
\[
\|(f_{\mu})_{\mu}\|_{\ci}=\|\|f_{\mu}\|_{\pe}\|_{\ell^q}.
\]
Since $L^{p(\cdot)}$ is uniformly convex, by \cite[Theorem 3]{Day} it follows that ${\ell}^{q}(L^{p(\cdot)}(\Omega))$ is also uniformly convex.

\item[2.]
To show that $\ell^{\infty}(L^{p(\cdot)})$ is not uniformly convex, it suffices to consider the functions $f=(\chi_{(0,1)}(\cdot), \chi_{(0,1)}(\cdot), \cdots )$ and $g=(0, \chi_{(0,1)}(\cdot),0,\chi_{(0,1)}(\cdot), \cdots )$.  Then we have
\begin{align*}
\|f\|_{\ell^{\infty}(L^{p(\cdot)})} &= \|g\|_{\ell^{\infty}(L^{p(\cdot)})}=1 \\
\|f-g\|_{\ell^{\infty}(L^{p(\cdot)})}=1 &~~~~\mbox{and}~~~~ \|\frac{f+g}{2}\|_{\ell^{\infty}(L^{p(\cdot)})}=1,
\end{align*}
which is the desired result.

\item[3.] Similarly, to show that $ \ell^1(L^{p(\cdot)}) $ is not uniformly convex, we can consider the functions $f=(\chi_{(0,1)}(\cdot), 0,0, \cdots )$ and $g=( 0,\chi_{(0,1)}(\cdot),0,0, \cdots )$. Then we have
\begin{align*}
  \|f\|_{\ell^{1}(L^{p(\cdot)})} &= \|g\|_{\ell^{1}(L^{p(\cdot)})}=1,   \\
 \quad \|f-g\|_{\ell^{1}(L^{p(\cdot)})}=2 &~~~~\mbox{and}~~~~ \|\frac{f+g}{2}\|_{\ell^{1}(L^{p(\cdot)})}=1
\end{align*}
which also violates the definition of uniform convexity.
\end{itemize}
This completes the proof.
\end{proof}

The following theorem aids in examining cases where the exponent $q$ is variable.

\begin{theorem}\label{T1}
\cite[Theorem 2.2.5]{LP} For every Banach space, $ X $ is uniformly convex if and only if its dual space $ X^* $ is uniformly smooth.
\end{theorem}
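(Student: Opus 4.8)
The plan is to pass to the quantitative \emph{moduli} of convexity and smoothness and to read the equivalence off from the Lindenstrauss duality formula relating them. For a Banach space $Y$, $\varepsilon\in[0,2]$ and $\tau>0$, set
\[
\delta_Y(\varepsilon)=\inf\Bigl\{1-\bigl\|\tfrac{u+v}{2}\bigr\|_Y:\ \|u\|_Y=\|v\|_Y=1,\ \|u-v\|_Y\ge\varepsilon\Bigr\},\quad \delta_Y(0):=0,
\]
\[
\rho_Y(\tau)=\sup\Bigl\{\tfrac12\bigl(\|u+\tau v\|_Y+\|u-\tau v\|_Y\bigr)-1:\ \|u\|_Y=\|v\|_Y=1\Bigr\}.
\]
Comparing with Definition~\ref{defUC}, $X$ is uniformly convex exactly when $\delta_X(\varepsilon)>0$ for every $\varepsilon\in(0,2]$; and, upon writing $y=\tau v$ with $\|v\|_X=1$, the stated definition of uniform smoothness of $X^*$ amounts to $\rho_{X^*}(\tau)/\tau\to 0$ as $\tau\to 0^{+}$. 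Note also that $\delta_X$ is nondecreasing, directly from its definition.

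The heart of the matter, and the step I expect to cost the most work, is the identity
\[
\rho_{X^*}(\tau)=\sup_{0\le\varepsilon\le 2}\Bigl(\tfrac12\,\tau\varepsilon-\delta_X(\varepsilon)\Bigr),\qquad \tau>0 .
\]
To establish it, I would fix $x^*,y^*\in S(X^*)$ and expand each dual norm as a supremum over the closed unit ball $B_X$, obtaining
\[
\|x^*+\tau y^*\|+\|x^*-\tau y^*\|=\sup_{u,v\in B_X}\bigl(\langle x^*,u+v\rangle+\tau\langle y^*,u-v\rangle\bigr).
\]
Taking the supremum over $x^*,y^*\in S(X^*)$, interchanging it with the supremum over $B_X\times B_X$ (a supremum of suprema), and using that $X\hookrightarrow X^{**}$ is isometric, one gets
\[
2+2\rho_{X^*}(\tau)=\sup_{u,v\in B_X}\bigl(\|u+v\|+\tau\|u-v\|\bigr).
\]
Two further points close the formula: first, the right-hand supremum is unchanged when one restricts to $\|u\|=\|v\|=1$ — this follows by writing each of $u,v$ as a convex combination of two unit vectors and invoking convexity of $(u,v)\mapsto\|u+v\|+\tau\|u-v\|$; second, for unit vectors $u,v$ with $\|u-v\|=\varepsilon$ the definition of $\delta_X$ gives $\|u+v\|\le 2-2\delta_X(\varepsilon)$, so the restricted supremum is $\le 2+\sup_{0\le\varepsilon\le2}(\tau\varepsilon-2\delta_X(\varepsilon))$, while the reverse inequality is obtained by choosing, for each $\varepsilon$, unit vectors nearly extremal in the infimum defining $\delta_X(\varepsilon)$. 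The convex‑combination reduction to the sphere, together with the Hahn--Banach/interchange bookkeeping, is really the only delicate point here.

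Granting the formula, the equivalence is routine. If $X$ is uniformly convex, fix $c\in(0,2)$; splitting the supremum at $\varepsilon=c$ and using $\delta_X\ge 0$ on $[0,c]$ and monotonicity ($\delta_X(\varepsilon)\ge\delta_X(c)>0$ for $\varepsilon>c$) yields $\rho_{X^*}(\tau)\le\max\bigl(\tfrac12\tau c,\ \tau-\delta_X(c)\bigr)$, which equals $\tfrac12\tau c$ once $\tau(1-\tfrac c2)\le\delta_X(c)$; hence $\limsup_{\tau\to 0^{+}}\rho_{X^*}(\tau)/\tau\le c/2$, and letting $c\to 0$ shows $X^*$ is uniformly smooth. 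Conversely, if $X$ is not uniformly convex, then $\delta_X(\varepsilon_0)=0$ for some $\varepsilon_0\in(0,2]$, and the formula gives $\rho_{X^*}(\tau)\ge\tfrac12\tau\varepsilon_0$ for all $\tau>0$, so $\rho_{X^*}(\tau)/\tau\not\to 0$ and $X^*$ is not uniformly smooth. The two implications together give the theorem (the one‑dimensional case being trivial and covered by the convention $\delta_X(0)=0$).
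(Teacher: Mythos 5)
Your argument is correct: it is the classical Lindenstrauss duality formula $\rho_{X^*}(\tau)=\sup_{0\le\varepsilon\le 2}\bigl(\tfrac12\tau\varepsilon-\delta_X(\varepsilon)\bigr)$, and each step you flag as delicate (the interchange of suprema via Hahn--Banach, the convex-combination reduction from $B_X$ to $S_X$, and the two-sided comparison with $\delta_X$) goes through as you describe. Note, however, that the paper does not prove this statement at all --- it is imported verbatim as \cite[Theorem 2.2.5]{LP} --- so there is no in-paper proof to compare against; your write-up is essentially the standard textbook proof of that cited result.
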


Initially, we demonstrate that the quantity $ \Phi_{f,g}(t) $ is non-decreasing as a function of $t$.
\begin{lemma}\label{rem:nondecreasing}
Let $ f,g\in\ci $, then $ \Phi_{f,g}(t) $ is  non-decreasing on $ t\in\mathbb{R}^+ $.
\end{lemma}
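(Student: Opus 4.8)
The plan is to show that the difference quotient
\[
\Phi_{f,g}(t)=\frac{\|f-tg\|_{\ci}-\|f\|_{\ci}}{t}
\]
is non-decreasing in $t$ by reducing the claim to the convexity of the norm. The crucial observation is that, as with any convex function on a vector space, the map $t\mapsto\|f-tg\|_{\ci}$ is convex on $\mathbb{R}$; indeed, for $t_1,t_2\in\mathbb{R}$ and $s\in[0,1]$,
\[
\|f-(st_1+(1-s)t_2)g\|_{\ci}=\|s(f-t_1 g)+(1-s)(f-t_2 g)\|_{\ci}\le s\|f-t_1 g\|_{\ci}+(1-s)\|f-t_2 g\|_{\ci},
\]
using only the triangle inequality and homogeneity of the norm (both of which hold since $\ci$ is a Banach space under the hypotheses in force). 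So the entire statement is a general fact about convex functions of one real variable, specialized to $\varphi(t):=\|f-tg\|_{\ci}$.

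The key steps, in order, are: (i) record that $\varphi$ is convex on $\mathbb{R}$, as above; (ii) invoke (or prove in two lines) the standard lemma that for a convex function $\varphi$ the slope $\dfrac{\varphi(t)-\varphi(0)}{t}$ is non-decreasing in $t$ on $\mathbb{R}\setminus\{0\}$ — this follows because, for $0<s<t$, writing $s=\frac{s}{t}\cdot t+(1-\frac{s}{t})\cdot 0$ and applying convexity gives $\varphi(s)\le\frac{s}{t}\varphi(t)+(1-\frac{s}{t})\varphi(0)$, i.e. $\frac{\varphi(s)-\varphi(0)}{s}\le\frac{\varphi(t)-\varphi(0)}{t}$, and the analogous manipulation handles the cases involving negative $t$ and the three-point inequality across $0$; (iii) identify $\varphi(0)=\|f\|_{\ci}$ and $\varphi(t)=\|f-tg\|_{\ci}$, so that $\dfrac{\varphi(t)-\varphi(0)}{t}=\Phi_{f,g}(t)$, and conclude that $\Phi_{f,g}$ is non-decreasing. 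Although the statement only asks for monotonicity on $\mathbb{R}^+$, carrying the argument through on all of $\mathbb{R}$ is no harder and is what one actually needs when later passing to the two-sided limit defining $\Phi_{f,g}(0)$ and uniform smoothness.

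There is essentially no obstacle here: the only thing to be a little careful about is that $\ci$ must genuinely be a normed space (not merely quasi-normed) for the triangle inequality in step (i) to be available, which is exactly the standing assumption "our results are true as long as $\ci$ is a Banach space." A secondary point, if one wants $\Phi_{f,g}$ to be defined and finite everywhere, is that $f,g\in\ci$ guarantees $f-tg\in\ci$ for every $t$, so $\varphi$ is real-valued; this is immediate from $\ci$ being a vector space. Thus the proof is short: state convexity of $t\mapsto\|f-tg\|_{\ci}$, apply the elementary monotonicity-of-slopes property of convex functions, and read off the conclusion.
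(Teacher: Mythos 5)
Your proof is correct, and it is a genuinely different (and more elementary) route than the one in the paper. The paper first transports $f,g$ into the K\"othe dual via the isomorphism \eqref{Kothedual}, writes the norm as a supremum of the pairings $\langle f'-tg',h\rangle$ over $\|h\|_{\ci}\le 1$, checks by a chain of algebraic equivalences that each individual difference quotient $\varphi_{f',g'}(t)=\frac{\langle f'-tg',h\rangle-1}{t}$ is non-decreasing (this reduces to $\langle f',h\rangle\le 1$, i.e.\ to the normalization $\|f'\|=1$), and then observes that a supremum of non-decreasing functions is non-decreasing. You instead note that $\varphi(t)=\|f-tg\|_{\ci}$ is convex by the triangle inequality and homogeneity, and invoke the standard monotonicity of slopes $\frac{\varphi(t)-\varphi(0)}{t}$ of a convex function. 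The two arguments are really the same fact seen from two sides --- the paper's pairing functions are exactly the affine minorants whose supremum witnesses the convexity you use --- but your version buys several things: it needs no duality or reflexivity machinery, it does not require the normalization $\|f\|_{\ci}=1$ that the paper's final step silently relies on, and, as you point out, it extends at no cost to all of $\mathbb{R}\setminus\{0\}$, which is what Theorem \ref{theoUC} actually uses when it handles negative $t$ in its Case 2. Your caveat that the argument needs $\ci$ to be genuinely normed (not merely quasi-normed) is exactly the right one and matches the paper's standing assumption.
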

\begin{proof}
According to existing isomorphic \eqref{Kothedual}, we have
\begin{align*}
\exists  f',g'\in \ci'  \quad:\quad \psi_{f'}=f,~ \psi_{g'}=g.
\end{align*}
Define $ \varphi_{f',g'}(t):=\frac{\langle f'-tg',h\rangle-1}{t} $ for $ \|h\|_{\ci}\leq 1 $. Then one can see $\varphi_{f',g'}(t)$ is a non-decreasing mapping on $ \mathbb{R}^+ $ i.e.  for every $ 0<t_1\leq t_2 $,
\[
\varphi_{f',g'}(t_1)\leq \varphi_{f',g'}(t_2).
\]
We note that in smooth space definition we assume 
\[
~\|f'\|_{\ci'}=\|\psi_{f'}\|_{(\ci)^*}=\|f\|_{(\ci)^*}~= 1. 
\]
Then for any $\|h\|_{\ci}\leq 1$, we have 
\[
\langle f',h\rangle~\leq~1.
\]
In sequentially, for $ 0< t_1\leq t_2 $, one can write 
  
\begin{align*}
&\qquad\varphi_{f',g'}(t_1)\leq \varphi_{f',g'}(t_2)
\\
\Leftrightarrow&
\qquad
\frac{\langle f'-t_1g',h\rangle -1}{t_1}\leq~ \frac{\langle f'-t_2g',h\rangle -1}{t_2}
\\
\Leftrightarrow&
\qquad
\langle f'-t_1g',t_2h\rangle -t_2\leq~ \langle f'-t_2g',t_1h\rangle -t_1
\\
\Leftrightarrow&
\qquad
\langle f'-t_1g',t_2h\rangle -\langle f'-t_2g',t_1h\rangle ~\leq~t_2-t_1
\\
\Leftrightarrow&
\qquad
\langle f',(t_2-t_1)h\rangle -t_1t_2\langle g',h\rangle +t_1t_2\langle g',h\rangle ~\leq~t_2-t_1
\\
\Leftrightarrow&
\qquad
\langle f',(t_2-t_1)h\rangle ~\leq~t_2-t_1
\\
\Leftrightarrow&
\qquad
\langle f',h\rangle ~\leq~1
\end{align*}
Now, by taking supremum over $ \|h\|_{\ci}\leq 1 $, one can write that the mapping $ \Phi_{f,g}(t) $ is also  nondecreasing with respect to $ t\in\mathbb{R}^+ $. 
\end{proof}

We are now equipped to address the question of the uniform convexity of $\ci$. 
\begin{theorem}\label{theoUC}
Let $ 1<q_-.p_-,q_+,p_+<\infty $, then
$ \ci $ is uniformly convex.
\end{theorem}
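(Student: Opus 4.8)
The plan is to deduce the uniform convexity of $\ci$ from the uniform smoothness of its dual via Theorem \ref{T1}, so the task reduces to proving that $Y:=(\ci)^*$ is uniformly smooth. Throughout I would realize $Y$, through the canonical isometry \eqref{Kothedual}, as the K\"othe dual $\ci'$ paired with $\ci$, and use freely what is already in place: $\ci$ (hence $Y$) is reflexive and separable, $Y$ is smooth by Theorem \ref{theo:smooth}, and by Lemma \ref{rem:nondecreasing} the quotient $\Phi_{x,y}(t)=(\|x-ty\|_Y-1)/t$ is non-decreasing in $t>0$ for each $x\in S(Y)$, $y\in Y$.

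I would first put uniform smoothness of $Y$ in the form $R_Y(t):=\sup_{x,y\in S(Y)}\bigl(\Phi_{x,-y}(t)+\Phi_{x,y}(t)\bigr)\to 0$ as $t\downarrow 0$. By Lemma \ref{rem:nondecreasing} the function under the supremum is non-negative (triangle inequality) and non-decreasing in $t$, and since $Y$ is smooth it tends to $0$ pointwise in $(x,y)$ (the one-sided derivatives $\pm\langle J(x),y\rangle$ cancel). Hence the whole content of the theorem is the promotion of this pointwise decay to decay of the supremum over the non-compact sphere $S(Y)$.

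For that promotion I would argue by contradiction, reusing the compactness scheme from the proof of Theorem \ref{theo:smooth}: if $R_Y(t_n)\ge\varepsilon_0$ along some $t_n\downarrow 0$, pick $x_n,y_n\in S(Y)$ almost attaining the supremum and $h_n^{\pm}\in S(\ci)$ norming $x_n\pm t_ny_n$ (the norm is attained by reflexivity), extract weak$^*$-convergent subsequences of $(x_n),(y_n),(h_n^{\pm})$ by Helly's theorem, upgrade weak$^*$ to weak convergence by reflexivity, and run the pairing computation of Theorem \ref{theo:smooth} to identify the limits and collapse the gap. The place where $1<p_-,p_+,q_-,q_+<\infty$ is indispensable is the quantitative control preventing the norms from dropping under these weak limits and keeping the difference quotients equicontinuous at $0$; I would get it from a Clarkson/Hanner-type pointwise inequality for $s\mapsto s^{p(x)}$ and $s\mapsto s^{q(x)}$ with constants depending only on $p_\pm,q_\pm$ (available because the exponents are pinched strictly between $1$ and $\infty$), integrated to a modular uniform-convexity estimate for $L^{p(\cdot)}$ and then summed over $\nu$ to one for $\varrho_{\ci}$, which together with the $\Delta_2$-property of $\varrho_{\ci}$ (valid since $q_+,p_+<\infty$, so modular and norm convergence are interchangeable with uniform constants) yields $R_Y(t)\to 0$; Theorem \ref{T1} then finishes.

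The step I expect to be the main obstacle is exactly this last modular estimate. Unlike the constant-$q$ case of Proposition \ref{propNotUC}(1), when $q(\cdot)$ is genuinely variable the norm of $\ci$ is not the iterated norm $\|(\|f_\nu\|_{p(\cdot)})_\nu\|_{\ell^{q(\cdot)}}$, so Day's theorem and the usual "$\ell^q$-sum of uniformly convex spaces" argument are unavailable; the Clarkson-type estimate has to be carried out directly on $\varrho_{\ci}((f_\nu)_\nu)=\sum_\nu\inf\{\lambda_\nu>0:\varrho_{p(\cdot)}(f_\nu/\lambda_\nu^{1/q(\cdot)})\le 1\}$, whose inner Luxemburg infimum does not factor through a single power of $\|f_\nu\|_{p(\cdot)}$. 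Once that is in hand, the compactness argument and the appeal to Theorem \ref{T1} are routine.
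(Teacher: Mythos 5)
Your overall skeleton coincides with the paper's: reduce to uniform smoothness of $(\ci)^*$ via the convexity--smoothness duality (Theorem \ref{T1}), get pointwise existence of the limit in \eqref{T2} from Theorem \ref{theo:smooth}, and use the monotonicity of $t\mapsto\Phi_{f,g}(t)$ from Lemma \ref{rem:nondecreasing} to promote pointwise convergence to convergence uniform over the sphere. You correctly identify that this promotion is the entire content of the theorem, and---unlike the paper, whose argument for it rests only on the monotonicity lemma and a contradiction---you recognize that monotone pointwise convergence on the non-compact sphere $S((\ci)^*)$ does not yield uniformity for free (a Dini-type argument needs compactness), so some genuinely quantitative input is required.

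The problem is that you do not supply that input. The Helly/weak$^*$ compactness scheme you propose to recycle from Theorem \ref{theo:smooth} does not transfer: in that proof one of the two sequences in the pairing converges in norm ($f_n^*\to f^*$), which is exactly what makes $\langle f_{n_i},f_{n_i}^*\rangle\to\langle f,f^*\rangle$ legitimate; in your setting both $x_n\pm t_ny_n$ and the norming elements $h_n^{\pm}$ converge only weakly, and the pairing of two weakly convergent sequences need not converge to the pairing of the limits. You acknowledge that the fix is a Clarkson/Hanner-type modular uniform-convexity estimate for $\varrho_{\ci}$ with genuinely variable $q(\cdot)$---and you rightly note that Day's theorem and the iterated-norm argument of Proposition \ref{propNotUC}(1) are unavailable there because the norm no longer factors as $\|(\|f_\nu\|_{L^{p(\cdot)}})_\nu\|_{\ell^{q(\cdot)}}$---but you then declare this estimate ``the main obstacle'' and stop. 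Since everything hinges on it (indeed, if you had a modular uniform-convexity estimate for $\varrho_{\ci}$ you could conclude uniform convexity of the Luxemburg norm directly, with no detour through the dual, Helly's theorem, or smoothness), the proposal as written is a plan rather than a proof: the decisive step is named but neither proved nor reduced to a citable result.
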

\begin{proof}
Thanks to Theorem \ref{theo:smooth}. and Definition \ref{def:Smooth}. the limit in \eqref{T2} exists. Then one can write, for every $ f,g\in(\ci)^* $, there exists a $ c(f,g) $ such that
\begin{align*}
\lim_{t\rightarrow 0}\Phi_{f,g}(t)=c(f,g),
\end{align*} 
in other words, for every $ f,g\in(\ci)^* $ we have
\begin{align}\label{Pointwiselimit}
\forall\varepsilon>0,~\exists\delta_1(\varepsilon,f,g):~t\in B(0,\delta_1(\varepsilon,f,g))
\longrightarrow~ c(f,g)-\frac{\varepsilon}{2}<\Phi_{f,g}(t)<\frac{\varepsilon}{2}+c(f,g).
\end{align}
Now, we are going to show that the limit in \eqref{T2} is uniform in $ f,g\in S(X^*) $ where $ X:=\ci $. We need to show that
\begin{align*}
\lim_{t\rightarrow 0}\sup_{f,g\in S(X^*)}|\Phi_{f,g}(t)-c(f,g)|
=0,
\end{align*}
in other words,
\begin{align*}
\forall\varepsilon>0,\quad\exists\delta(\varepsilon)\quad:~t\in B(0,\delta(\varepsilon))~\longrightarrow~\sup_{f,g\in S(X^*)}|\Phi_{f,g}(t)-c(f,g)|<\varepsilon.
\end{align*}
By redutio and absurdum argument,
\begin{align}\label{reductioandabsurdum}
& \exists\varepsilon_0>0,\quad\forall\delta(\varepsilon_0)~:\quad\exists t_0\in B(0,\delta(\varepsilon_0))~
\\
\&\quad&\sup_{f,g\in S(X^*)}|\Phi_{f,g}(t_0)-c(f,g)|\geq\varepsilon_0>\frac{\varepsilon_0}{2},\nonumber
\end{align}
then $ \exists f_0,g_0\in S(X^*) $ such that
\begin{align*}
|\Phi_{f_0,g_0}(t_0)-c(f_0,g_0)|>\frac{\varepsilon_0}{2}.
\end{align*}
Let us deal with following two cases:
\begin{itemize}
\item[Case 1:] If $ t_0\in[0,\delta(\varepsilon_0)] $: By Remark \ref{rem:nondecreasing} we have
\begin{align*}
\frac{\varepsilon_0}{2}&< |\Phi_{f_0,g_0}(t_0)-c(f_0,g_0)|
\\
&=|\Phi_{f_0,g_0}(t_0)-\inf_{0<t}\Phi_{f_0,g_0}(t)|
\\
&=\Phi_{f_0,g_0}(t_0)-\inf_{0<t}\Phi_{f_0,g_0}(t)
\\
&=\Phi_{f_0,g_0}(t_0)-c(f_0,g_0), 
\end{align*}
then by \eqref{Pointwiselimit} and \eqref{reductioandabsurdum} and considering $ \varepsilon:=\varepsilon_0,~ t:=t_0 $ and $ \delta(\varepsilon_0):=\delta_1(\varepsilon_0,f_0,g_0) $, we have
\begin{align*}
\exists\varepsilon_0,\quad\exists\delta(\varepsilon_0),\quad\exists t_0\in [0,\delta(\varepsilon_0)]\quad\&\quad
c(f_0,g_0)+\frac{\varepsilon_0}{2}<\Phi_{f_0,g_0}(t_0)<\frac{\varepsilon_0}{2}+c(f_0,g_0),
\end{align*}
which is a contradiction.
\item[Case 2:] If $ t_0\in[-\delta(\varepsilon_0),0) $: then $ -t_0\in (0,\delta(\varepsilon_0)] $ and 
\begin{align*}
-t\in[0,\delta(\varepsilon_0)]~:~\Phi_{f,g}(-t)=-\Phi_{f,-g}(t)~\Longrightarrow~\lim_{t\rightarrow 0}\Phi_{f,g}(-t)=-c(f,-g),
\end{align*}
and by Remark \ref{rem:nondecreasing} we have
\begin{align*}
\frac{\varepsilon_0}{2}&
<|\Phi_{f_0,g_0}(-t_0)-\inf_{0<-t}\Phi_{f_0,g_0}(-t)|
\\
&=\Phi_{f_0,g_0}(-t_0)-\inf_{0<-t}\Phi_{f_0,g_0}(-t)
\\
&= \Phi_{f_0,g_0}(-t_0)-\lim_{t\rightarrow 0}\Phi_{f_0,g_0}(-t)
\\
&=\Phi_{f_0,g_0}(-t_0)+c(f_0,-g_0),
\end{align*}
thus as in the Case 1, one can write
\begin{align*}
\exists\varepsilon_0,~\exists\delta(\varepsilon_0),~\exists -t_0\in [0,\delta(\varepsilon_0)]~\&~
-c(f_0,-g_0)+\frac{\varepsilon_0}{2}<\Phi_{f_0,g_0}(-t_0)<\frac{\varepsilon_0}{2}-c(f_0,-g_0),
\end{align*}
which is a contradiction.
\end{itemize}
Therefore, as $ t\rightarrow 0 $, the limit in \eqref{T2} exists and is uniform in $ f,g\in S((\ci)^*) $. Thus by Theorem \ref{T1} and relation \eqref{T2}, the desired result is obtained.
\end{proof}

\section{Convergence in measure}
Now, we aim to establish that, given certain conditions on the exponents $ \p, \q\in\mathcal{P} $, the mixed variable Lebesgue spaces exhibit the property where convergence in norm also entails convergence in measure.

During the proof of the subsequent theorem, similar to the approach in \cite{GRS}, we use $P_N$ to represent the projection onto $U_N$. Here, $U_N$ is defined as the set containing all sequences $(f_\nu)_{\nu=1}^\infty$ for which $f_\nu=0$ whenever $\nu > N$.

\begin{theorem}\label{NormImpliesMeauser}
Suppose that $ \p,\q \in\mathcal{P} $ and $ q_+,p_+<\infty, $ then the space $ \ci $ has the property that convergence in norm implies convergence in measure.
\end{theorem}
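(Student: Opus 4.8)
The plan is to transfer the norm hypothesis to each coordinate, apply the corresponding classical fact for the scalar variable Lebesgue space $L^{\p}$, and then reassemble the coordinates with the truncation projections $P_N$. By convergence in measure of $f^{(n)}=(f^{(n)}_\nu)_\nu$ to $f=(f_\nu)_\nu$ I mean convergence on the measure space $\mathbb{N}\times\mathbb{R}^n$ equipped with the product $m$ of counting measure and Lebesgue measure, understood locally: for every $E\subseteq\mathbb{N}\times\mathbb{R}^n$ with $m(E)<\infty$ and every $\varepsilon>0$, $m(B_n)\to0$, where $B_n:=\{(\nu,x)\in E:|f^{(n)}_\nu(x)-f_\nu(x)|>\varepsilon\}$; equivalently, componentwise local convergence in measure on $\mathbb{R}^n$ together with an (automatic) tail estimate in $\nu$.

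\emph{Step 1: the coordinate projections $\ci\to L^{\p}$ are bounded.} For fixed $\nu$, consider the inclusion $\iota_\nu:L^{\p}\to\ci$ sending $g$ to the sequence that equals $g$ in the $\nu$-th slot and $0$ elsewhere. Unwinding the definitions of $\varrho_{\ci}$ and of the Luxemburg norm on such a one-term sequence — and using the unit-ball property of $\varrho_{\ci}$ (that $\|h\|_{\ci}\le1$ iff $\varrho_{\ci}(h)\le1$), which holds because $q_+<\infty$ — one checks that $\|\iota_\nu g\|_{\ci}=\|g\|_{L^{\p}}$, so $\iota_\nu$ is isometric. Since $\iota_\nu(f_\nu)$ is dominated coordinatewise by $f$ and $\varrho_{\ci}$, hence $\|\cdot\|_{\ci}$, is monotone, it follows that $\|f_\nu\|_{L^{\p}}=\|\iota_\nu(f_\nu)\|_{\ci}\le\|f\|_{\ci}$ for every $f=(f_\nu)_\nu\in\ci$ and every $\nu$. (Equivalently one may write $\iota_\nu(f_\nu)=(P_\nu-P_{\nu-1})f$ and bound its norm by $2\|f\|_{\ci}$.)

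\emph{Steps 2--3: reduction to the scalar statement and reassembly.} If $f^{(n)}\to f$ in $\ci$, then Step 1 gives $\|f^{(n)}_\nu-f_\nu\|_{L^{\p}}\to0$ for each fixed $\nu$; by the known fact that in $L^{\p}$ norm convergence implies local convergence in measure — which follows from the estimate $\varrho_{\p}(g)\le\|g\|_{L^{\p}}$ valid for $\|g\|_{L^{\p}}\le1$ together with Chebyshev's inequality, the latter using $p_+<\infty$; see \cite{Cruz} — we obtain $f^{(n)}_\nu\to f_\nu$ in measure on $\mathbb{R}^n$ (locally) for every $\nu$. To promote this to convergence on $\mathbb{N}\times\mathbb{R}^n$, fix $E$ with $m(E)<\infty$, $\varepsilon>0$ and $\eta>0$; putting $E_\nu=\{x:(\nu,x)\in E\}$ we have $\sum_\nu|E_\nu|<\infty$, so we may choose $N$ with $\sum_{\nu>N}|E_\nu|<\eta/2$. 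The part of $B_n$ with $\nu>N$ has $m$-measure at most $\sum_{\nu>N}|E_\nu|<\eta/2$, uniformly in $n$, while the part with $\nu\le N$ involves only $P_Nf^{(n)}$ and $P_Nf$, which live in $U_N$ — a finite sum of copies of $L^{\p}$ via $\iota_1,\dots,\iota_N$ — so the previous paragraph makes each of the finitely many quantities $|\{x\in E_\nu:|f^{(n)}_\nu(x)-f_\nu(x)|>\varepsilon\}|$, $\nu\le N$, tend to $0$, whence their sum is $<\eta/2$ for $n$ large. Therefore $m(B_n)<\eta$ for $n$ large, which is the assertion.

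The main obstacle is Step 1 — the uniform boundedness of the coordinate projections $\ci\to L^{\p}$ — because it forces one to work with the rather intricate definition of $\varrho_{\ci}$, and it is there that the hypotheses $q_+<\infty$ (the unit-ball property of the modular) and, later in Step 2, $p_+<\infty$ genuinely enter. Everything else is the classical scalar statement for $L^{\p}$ together with the elementary truncation bookkeeping already used in \cite{GRS}.
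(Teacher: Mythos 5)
Your argument is correct for the notion of convergence in measure that you adopt, but it is worth spelling out how it differs from the paper's. The paper reads $|(f_n-f)(x)|$ as the pointwise $\ell^2$-norm $\bigl(\sum_{\nu}|f_{n,\nu}(x)-f_\nu(x)|^2\bigr)^{1/2}$ and aims at global convergence in measure on $\mathbb{R}^n$ of this scalar function, whereas you prove local (finite-measure-set) convergence in measure of the individual coordinates on the product space $\mathbb{N}\times\mathbb{R}^n$. The engine is the same in both cases: a Chebyshev estimate in $L^{\p}$ (using $p_+<\infty$), the modular--norm comparison that feeds $\|f_n-f\|_{\ci}$ into that estimate (the paper runs the chain $\varrho_{\p}(g)=\varrho_{\p/\q}(|g|^{\q})\le\||g|^{\q}\|_{\p/\q}\le\varrho_{\ci}(f_n-f)\le\|f_n-f\|_{\ci}$ directly on modulars, while you package the same information as the isometry $\|\iota_\nu g\|_{\ci}=\|g\|_{L^{\p}}$ plus monotonicity), and the truncations $P_N$. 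Where the two genuinely part ways is the recombination over $\nu$: the paper's bound $|A_N(n)|\le (N/\lambda)^{p_+}\|f_n-f\|_{\ci}$ degenerates as $N\to\infty$, and its final contradiction argument chooses $\delta:=\delta'(N_0,\varepsilon_0)$ for an $N_0$ that was itself produced from an $n_0\ge\delta$, so the passage from $\lim_n|A_N(n)|=0$ (each $N$) to $\lim_n\sup_N|A_N(n)|=0$ is not justified as written; your restriction to sets $E$ with $m(E)<\infty$ makes the tail in $\nu$ uniformly small for free and closes the argument cleanly, at the cost of a weaker (local, coordinatewise) conclusion. If the intended statement is the paper's global $\ell^2$-valued one, your proof does not yield it and would need an additional uniform-in-$N$ ingredient --- for instance the observation that $\sum_{\nu}|\{x:|f_{n,\nu}(x)-f_\nu(x)|>\lambda\}|\le\lambda^{-p_+}\varrho_{\ci}(f_n-f)$ already controls all coordinates at once --- so you should either state explicitly which notion you are proving or strengthen the last step.
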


\begin{proof}
Let $ \lambda\in(0,1) $ and $ A_N(n):=\{x:|P_N(f_n-f)(x)|>\lambda \} $, then 
\begin{align*}
A_N(n)&=\{x:|P_N(f_n-f)(x)|>\lambda \}
\\
&=\{x:\Big(\sum_{\nu=1} ^N|(f_{\nu_n}-f_\nu)(x)|^2\Big)^{\frac{1}{2}}>\lambda \}
\\
&\subseteq\{x:\sum_{\nu=1}^N|(f_{\nu_n}-f_\nu)(x)|>\lambda \}
\\
&\subseteq\underset{\nu=1}{\overset{N}{\bigcup}}~\{x:|(f_{\nu_n}-f_\nu)(x)|>\frac{\lambda}{N} \}.
\end{align*}
We have $ \|f_n-f\|_{\ci}\rightarrow 0 $, then we without loss of generality, may consider $ \|f_n-f\|_{\ci}\leq 1 $. For every $ N\in\mathbb{N} $ we have
\begin{align*}
|A_N(n)|&=|\{x:|P_N(f_n-f)(x)|>\lambda \}|
\\
&\leq \sum_{\nu=1}^N|\{x: |(f_{\nu_n}-f_\nu)(x)|>\frac{\lambda}{N}\}|
\\
&\leq(\frac{N}{\lambda})^{p_+} \sum_{\nu=1}^N\int|(f_{\nu_n}-f_\nu)(x)|^{p(x)} dx
\\
&=(\frac{N}{\lambda})^{p_+}\sum_{\nu=1}^N\varrho_{\frac{p(\cdot)}{q(\cdot)}}(|f_{\nu_n}-f_\nu|^{q(\cdot)})
\\
&\leq(\frac{N}{\lambda})^{p_+}\sum_{\nu=1}^N\||f_{\nu_n}-f_\nu|^{q(\cdot)}\|_{\frac{p(\cdot)}{q(\cdot)}}
\\
&\leq(\frac{N}{\lambda})^{p_+}\varrho_{\ci}(f_n-f)
\\
&\leq(\frac{N}{\lambda})^{p_+}\|f_n-f\|_{\ci}.
\end{align*}
Therefore, for every $ N\in\mathbb{N} $, we have $ \underset{n\rightarrow\infty}{\lim}|A_N(n)|=0 $. In other words,
\begin{align}\label{limofn}
\forall N,\quad\forall\varepsilon>0,\quad\exists\delta'(N,\varepsilon):\quad n\geq\delta'(N,\varepsilon)\longrightarrow |A_N(n)|<\frac{\varepsilon}{2}.
\end{align}
Let $ A(n):=\{x:|(f_n-f)(x)|>\lambda \} $, then
\begin{align*}
A(n)&=\{x:|(f_n-f)(x)|>\lambda \}
\\
&=\{x:\lim_{N\to\infty}|P_N(f_n-f)(x)|>\lambda \}
\\
&=\underset{N=1}{\overset{\infty}{\bigcup}}\{x:|P_N(f_n-f)(x)|>\lambda \}
\\
&=\underset{N=1}{\overset{\infty}{\bigcup}} A_N(n).
\end{align*}
Since, for every $ n $, the sequence $ (A_N(n))_N $ is a nondecreasing sequence
\begin{align}\label{sup of sequs}
|A(n)|&=|\underset{N=1}{\overset{\infty}{\bigcup}} A_N(n)|\nonumber
\\
&=\lim_{N\to\infty}|A_N(n)|
\\
&=\sup_{N\in\mathbb{N}}|A_N(n)|.\nonumber
\end{align}
Now we are going to show that
\begin{align*}
\lim_{n\rightarrow\infty}|A(n)|=0.
\end{align*}
By reductio and absurdum, we have
\begin{align*}
\exists\varepsilon_0,\quad\forall \delta,\quad\exists n_0\geq\delta\quad\&\quad |A(n_0)|\geq\varepsilon_0>\frac{\varepsilon_0}{2},
\end{align*}
and by \eqref{sup of sequs}, one can write
\begin{align}\label{reductio and absurdum}
\exists\varepsilon_0,\quad\forall \delta,\quad\exists n_0\geq\delta\quad\&\quad\exists N_0:\quad |A_{N_0}(n_0)|>\frac{\varepsilon_0}{2}.
\end{align}
Let in \eqref{limofn}, the values $ N:=N_0,~\varepsilon:=\varepsilon_0 $, then in the \eqref{sup of sequs} set $ \delta:=\delta'(N_0,\varepsilon_0) $. Thus we have
\begin{align*}
\exists n_0\geq\delta: \quad\frac{\varepsilon_0}{2}<|A_{N_0}(n_0)|<\frac{\varepsilon_0}{2},
\end{align*}
which is a contradiction, and this completes the proof.
\end{proof}

\section{Approximate Identity}

Following the methodology outlined in the \cite{Cruz}, we apply the technique of approximate identities, also referred to as mollification, within the space $ \ell^1(L^\frac{\p}{\q}) $. For a given function $ \phi $ and each $ t > 0 $, we define $\phi_t(x)=t^{-n}\phi(x\slash t) $. This normalization ensures that if $ \phi $ belongs to $ L^1 $, then the $ L^1 $ norm of $ \phi_t $ remains consistent with that of $ \phi $. The radial majorant of $ \phi $ is then determined by the function
\begin{align*}
\Phi(x)=\sup_{|y|>|x|}\phi(y).
\end{align*}

\begin{definition}
For a function $ \phi $ in $ L^1 $ with the property that $ \int_{\mathbb{R}^n}\phi(x)dx=1 $, the collection $ \{\phi_t\}=\{\phi_t:~t>0 \} $ is known as an approximation identity. When the radial majorant of $ \phi $ belongs to $ L^1 $ as well, this collection $ {\phi_t} $ is referred to as a potential type approximation identity.
\end{definition}

If $ f $ belongs to $ L^p $ for $ p $ in the range $ [1,\infty] $ and $ \{\phi_t\} $ constitutes a potential type approximation identity, then it follows that
\begin{align*}
\phi_t * f(x) \to f(x),
\end{align*}
almost everywhere for each point, as $ t $ approaches zero.

In the following Theorem \ref{theo2.10}, we aim to demonstrate that the space $ \ell^1(L^{\p\slash\q}) $ upholds the approximation identity.  It is important to note that achieving such a result is not possible in a general setting.

\begin{theorem}\label{theo1}
Let $ \p,\q \in\mathcal{P} $ such that  $ q_+,p_+<\infty, $ and $ \{T_t\}_{t} $ be a family of linear operators and $ \mu $ as a measure on $ \mathbb{R}^n $. We will define $ T^*f(x):=\underset{t}{\sup}|T_tf(x)| $ for  $ x\in\mathbb{R}^n. $ Suppose that there exist
$(d,c,\alpha)\in(\mathbb{R}^{\geq 0})^3$ such that
\begin{equation}\label{firstCondition}
\mu\{x\in\mathbb{R}^n:~|T^*f(x)|>\lambda \}\leq\Big(\frac{c\|f\|_{\ci}}{\lambda^\alpha}\Big)^d,
\end{equation}
for all $\lambda>0$ and
\begin{equation}\label{secondCondition}
\lim_{t\rightarrow t_0}T_tP_Nf(x)=P_Nf(x),
\end{equation}
almost everywhere for all $N \in {\mathbb N}$. Then the set $ \{ f\in\ci:~\underset{t\rightarrow t_0}{\lim}T_tf(x)=f(x)~a.e.\} $  is closed in $ \ci $.
\end{theorem}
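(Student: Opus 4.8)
The plan is to run the classical argument showing that the a.e.-convergence set of a family of linear operators is closed once the associated maximal operator satisfies a weak-type bound, now carried out in $\ci$. Write $E:=\{f\in\ci:\lim_{t\to t_0}T_tf(x)=f(x)\text{ a.e.}\}$ and fix $(f_k)\subset E$ with $f_k\to f$ in $\ci$; the goal is $f\in E$. The main device is the \emph{oscillation} $\Omega g(x):=\limsup_{t\to t_0}|T_tg(x)-g(x)|$, which satisfies $g\in E$ if and only if $\Omega g=0$ a.e.; hence it suffices to prove $\mu\bigl(\{x:\Omega f(x)>\lambda\}\bigr)=0$ for every $\lambda>0$ (when $\mu$ is Lebesgue measure, as in the applications, ``a.e.'' has its usual meaning throughout).

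Since each $T_t$ is linear, $\Omega$ is subadditive, $\Omega(g+h)(x)\le\Omega g(x)+\Omega h(x)$ pointwise. Applying this with $g=f_k$ and $h=f-f_k$, and using $\Omega f_k=0$ a.e., gives $\Omega f\le\Omega(f-f_k)$ a.e. On the other hand, the triangle inequality for the fibrewise norm $|\cdot|$ together with the definition of $T^*$ yields $\Omega(f-f_k)(x)\le T^*(f-f_k)(x)+|(f-f_k)(x)|$ for every $x$. Consequently, for each $\lambda>0$,
\[
\{x:\Omega f(x)>\lambda\}\subseteq\{x:T^*(f-f_k)(x)>\lambda/2\}\cup\{x:|(f-f_k)(x)|>\lambda/2\}.
\]

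Now I would estimate the two sets on the right and let $k\to\infty$. For the first, the weak-type hypothesis \eqref{firstCondition} gives $\mu\bigl(\{x:T^*(f-f_k)(x)>\lambda/2\}\bigr)\le\bigl(2^{\alpha}c\,\|f-f_k\|_{\ci}/\lambda^{\alpha}\bigr)^{d}$, which tends to $0$ because $f_k\to f$ in $\ci$. For the second, Theorem \ref{NormImpliesMeauser} guarantees that norm convergence in $\ci$ implies convergence in measure, so the measure of $\{x:|(f-f_k)(x)|>\lambda/2\}$ tends to $0$ as well. Since the left-hand set does not depend on $k$, it follows that $\mu\bigl(\{x:\Omega f(x)>\lambda\}\bigr)=0$ for every $\lambda>0$; thus $\Omega f=0$ a.e., i.e. $f\in E$, and $E$ is closed.

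A remark on the hypotheses and the difficulty. Closedness of $E$ uses only \eqref{firstCondition} and Theorem \ref{NormImpliesMeauser}; condition \eqref{secondCondition} is what makes the statement useful, since $P_Nf\to f$ in $\ci$ whenever $q_+<\infty$ and \eqref{secondCondition} places every $P_Nf$ in $E$, so the closedness just proved actually upgrades to $E=\ci$, which is the form needed for approximate identities in $\ell^1(L^{\p\slash\q})$. The one point requiring care is the compatibility of the measure $\mu$ in \eqref{firstCondition} with the Lebesgue measure underlying Theorem \ref{NormImpliesMeauser}: the two bounds above combine cleanly exactly when these measures are comparable, which is the case in the intended applications. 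The genuinely substantive ingredient is precisely this pairing of a maximal weak-type estimate with the ``norm implies measure'' convergence of the previous section; the rest is the routine oscillation bookkeeping.
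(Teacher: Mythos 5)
Your proof is correct and follows essentially the same route as the paper: decompose the oscillation $\limsup_{t\to t_0}|T_tf-f|$ through a function already in the convergence set, bound the two resulting pieces by the weak-type estimate \eqref{firstCondition} for $T^*$ and by Theorem \ref{NormImpliesMeauser} respectively, and let the approximation parameter go to infinity. The only differences are cosmetic but in your favor: you run the argument for an arbitrary sequence $(f_k)\subset E$ converging to $f$, which is what literal closedness requires, whereas the paper's proof specializes to the sequence $P_Nf$ (the case actually used in Theorem \ref{theo2.10}); and your caveat about the compatibility of the measure $\mu$ in \eqref{firstCondition} with the Lebesgue measure underlying Theorem \ref{NormImpliesMeauser} flags a point the paper itself passes over silently.
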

\begin{proof}
Let $ \|f\|_{\ci}\leq 1,~ \|f-P_Nf\|_{\ci}\longrightarrow 0 $ and  for every $ N\in \mathbb{N}$ we have $ \underset{t\rightarrow t_0}{\lim}T_tP_Nf(x)=P_Nf(x) $ almost everywhere.  Then by the linearity of $ T_t $ and the definition of $ T^* $, for every $ \lambda>0, $ we have
\begin{align*}
&~\quad\mu\{x:~\limsup_{t\rightarrow t_0}|T_tf(x)-f(x)|>\lambda \}
\\
&\leq\mu\{x:~|T^*(f-P_Nf)(x)|>\frac{\lambda}{2} \}+\mu\{x:~|(f-P_Nf)(x)|>\frac{\lambda}{2} \}
\\
&\leq \Big(\frac{c~\|f-P_Nf\|_{\ci}}{(\lambda\slash 2)^\alpha}\Big)^d~+\mu\{x:~|(f-P_Nf)(x)|>\frac{\lambda}{2} \}.
\end{align*}
By Theorem \ref{NormImpliesMeauser}, we have 
\begin{align*}
\mu\{x:~|(f-P_Nf)(x)|>\frac{\lambda}{2} \}\longrightarrow 0,
\end{align*} 
thus
\begin{align}\label{astast}
\mu\{x:~\limsup_{t\rightarrow t_0}|T_tf(x)-f(x)|>\lambda \}=0
\end{align}
for all $\lambda>0$. In addition,
\begin{align*}
\{x:~\limsup_{t\rightarrow t_0}&|T_tf(x)-f(x)|>0 \}\subseteq\underset{k=1}{\bigcup^\infty}\{x:~\limsup_{t\rightarrow t_0}|T_tf(x)-f(x)|>\frac{1}{k} \}.
\end{align*}
Therefore, by \eqref{astast} we have
\begin{align*}
\mu\{x:~\limsup_{t\rightarrow t_0}|T_tf(x)-f(x)|>0 \}
&\leq\mu\Big(\underset{k=1}{\bigcup^\infty}\{x:~\limsup_{t\rightarrow t_0}|T_tf(x)-f(x)|>\frac{1}{k} \}\Big)
\\
&\leq\sum_{k=1}^{\infty}\mu\{x:~\limsup_{t\rightarrow t_0}|T_tf(x)-f(x)|>\frac{1}{k}\}
=0,
\end{align*}
which is the desired result.
\end{proof}

We observe that for $ \p,\q\in\mathcal{P} $ and $ q_+,p_+<\infty $, the condition $ f\in L^{p(\cdot)} $ is equivalent to
$ f^{q(\cdot)}\in L^{\frac{p(\cdot)}{q(\cdot)}} $. Hence, the following definition is well-posed.

\begin{definition}\label{def1}
Let $ \p,\q\in\mathcal{P} $ and $ q_+,p_+<\infty $, then define
\begin{align*}
V:=\Big\{g=(f_\nu^\q)_\nu~\Big|~(f_\nu)_\nu\in(L^{p(\cdot)})^{\mathbb{N}}~~ \&~~\|g\|_V=\|(f_\nu^{\q})_\nu\|_{\ell^1(L^{\p\slash\q})}<\infty\Big\}.
\end{align*}
\end{definition}
We note that the set $ V\subset\underset{\nu\in\mathbb{N}}{\prod} L^{\frac{\p}{\q}} $ is well-defined.

\begin{lemma}\label{lem1}
Let $ \p,\q \in\mathcal{P} $ and $ q_+,p_+<\infty, $ and $ \phi $ be a positive radial decreasing and integrable function. Then for $ g\in V $ we have
\begin{align*}
\underset{t}{\sup}|(\phi_t*g)(x)|\leq\|\phi\|_1\underset{\nu=1}{\sum^\infty} \mathcal{M}f_\nu^{q(\cdot)}(x).
\end{align*}
\end{lemma}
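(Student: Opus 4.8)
The plan is to reduce the statement, which concerns the mixed space $\ell^1(L^{\p\slash\q})$, to the classical pointwise bound $|\phi_t * h|(x) \le \|\phi\|_1\,\mathcal{M}h(x)$ for a single function $h \in L^1_{loc}$, applied term by term and then summed over $\nu$. Concretely, for $g = (f_\nu^{\q})_\nu \in V$, each coordinate is a nonnegative locally integrable function $h_\nu := f_\nu^{q(\cdot)}$ (here I would first note that membership of $g$ in $V$ forces each $h_\nu \in L^{\p\slash\q} \subseteq L^1_{loc}$, so the convolutions make sense), and the convolution $\phi_t * g$ should be interpreted coordinatewise, $(\phi_t * g)_\nu = \phi_t * h_\nu$.

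The key steps, in order, are as follows. First, recall the standard fact — stated in the excerpt just before the Definition of an approximation identity, and available in \cite{Cruz} — that if $\phi$ is positive, radial and decreasing (so $\phi$ coincides with its own radial majorant $\Phi$) and integrable, then for any $h \in L^1_{loc}$ one has the pointwise estimate $\sup_{t>0}|(\phi_t * h)(x)| \le \|\phi\|_1\,\mathcal{M}h(x)$ at every $x$. Second, apply this to each $h_\nu = f_\nu^{q(\cdot)}$ to get $\sup_t |(\phi_t * f_\nu^{q(\cdot)})(x)| \le \|\phi\|_1\,\mathcal{M}f_\nu^{q(\cdot)}(x)$ for every $\nu$ and every $x$. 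Third, sum over $\nu$: since $|(\phi_t * g)(x)|$ is the coordinatewise object, and in the relevant ($\ell^1$) sense the magnitude at $x$ is $\sum_\nu |(\phi_t * f_\nu^{q(\cdot)})(x)|$, interchanging the supremum over $t$ with the sum using that each term is $\le \sup_t$ of itself gives
\[
\sup_t |(\phi_t * g)(x)| \le \sum_{\nu=1}^\infty \sup_t |(\phi_t * f_\nu^{q(\cdot)})(x)| \le \|\phi\|_1 \sum_{\nu=1}^\infty \mathcal{M}f_\nu^{q(\cdot)}(x),
\]
which is exactly the claimed inequality. If the convention in the paper is that $|(\phi_t*g)(x)|$ denotes an $\ell^2$-type quantity $\big(\sum_\nu |(\phi_t*f_\nu^{q(\cdot)})(x)|^2\big)^{1/2}$ (as in the proof of Theorem \ref{NormImpliesMeauser}), the same bound still follows because $\ell^2 \hookrightarrow \ell^1$ pointwise, so one dominates by the $\ell^1$ sum and proceeds identically.

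The main obstacle, and the step deserving the most care, is the first one: verifying the elementary convolution estimate $|\phi_t * h|(x) \le \|\phi\|_1 \mathcal{M}h(x)$ for positive radial decreasing integrable $\phi$. This is the classical ``maximal function controls approximate identities of potential type'' lemma; the proof slices $\phi$ by its level sets (or writes $\phi$ as an increasing limit of sums of dilated indicator functions of balls, using radial monotonicity) and averages, observing that $\int (\chi_{B(0,r)})_t * h \le |B(0,r)|\,t^{-n}\cdot$(average of $h$ over $B(x,rt)$) $\le \|(\chi_{B(0,r)})_t\|_1\,\mathcal{M}h(x)$, then integrates the layer-cake decomposition of $\phi$ against this. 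The remaining steps — term-by-term application and summation — are routine, the only subtlety being the bookkeeping of which norm ($\ell^1$ vs. $\ell^2$) is meant by $|(\phi_t*g)(x)|$, and in either case the stated inequality holds. I would also remark that the finiteness of the right-hand side is not asserted here; the lemma is a pointwise inequality, and its use (presumably in the subsequent Theorem \ref{theo2.10}) will combine it with the boundedness of $\mathcal{M}$ on $L^{\p\slash\q}$ from Theorem \ref{T1}.
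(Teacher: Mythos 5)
Your proposal is correct and follows essentially the same route as the paper: the paper likewise invokes the classical pointwise bound $\sup_t|(\phi_t*f_\nu^{q(\cdot)})(x)|\le\|\phi\|_1\mathcal{M}f_\nu^{q(\cdot)}(x)$ coordinatewise, interprets $|(\phi_t*g)(x)|$ as the $\ell^2$-type quantity (as in Theorem~\ref{NormImpliesMeauser}), pulls the supremum inside the sum, and finishes with the pointwise embedding $\ell^2\hookrightarrow\ell^1$ exactly as you describe. Your extra care about which norm is meant and about local integrability of each coordinate is a reasonable clarification but does not change the argument.
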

\begin{proof}
Recall that for every  $ \nu $ we have $ \Big[\underset{t}{\sup}|(\phi_t*f_\nu^\q)(x)|\Big]^2\leq\Big[\|\phi\|_1\mathcal{M}f_\nu^\q(x)\Big]^2. $ Then
\begin{align*}
\sup_t|(\phi_t*g)(x)|&\leq \Big(\sum_{\nu=1}^\infty\Big[\sup_t|(\phi_t*f_\nu^\q)(x)|\Big]^2 \Big)^\frac{1}{2}
\\
&\leq\|\phi\|_1 \Big(\sum_{\nu=1}^\infty\Big[\mathcal{M}f_\nu(x)\Big]^2 \Big)^\frac{1}{2}
\\
&\leq\|\phi\|_1 \sum_{\nu=1}^\infty\mathcal{M}f_\nu(x),
\end{align*}
which is the desired result.
\end{proof}

\begin{lemma}\label{lem2}
Let $ \p,\q \in\mathcal{P} $ where $ q_+,p_+<\infty, $ and $ 1<(\frac{p}{q})_-,~\frac{\p}{\q}\in LH(\mathbb{R}^n)$ and $ \mu $ as a measure on $ \mathbb{R}^n $. Then we can find $(d,c,\alpha)\in(\mathbb{R}^+)^3$ such that 
\begin{align*}
\mu\{x\in\mathbb{R}^n:~\sum_{\nu=1}^\infty\mathcal{M}f_\nu^{q(\cdot)}(x)>\lambda \}\leq\Big(\frac{c~\|(f_\nu^{\q})_\nu\|_{\ell^1(L^{p(\cdot)/q(\cdot)})}}{\lambda^\alpha}\Big)^{d}.
\end{align*}
for all $\lambda>0$.
\end{lemma}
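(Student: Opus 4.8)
The plan is to pass everything through the single scalar exponent $r(\cdot):=\tfrac{p(\cdot)}{q(\cdot)}$ and then combine the boundedness of the Hardy--Littlewood maximal operator on $L^{r(\cdot)}$ with a Chebyshev inequality adapted to variable exponents. First I would record that $1<r_-\le r_+\le p_+<\infty$ (using $q(\cdot)\ge1$) and that $r(\cdot)\in LH(\mathbb{R}^n)$ forces $\tfrac1{r(\cdot)}=\tfrac{q(\cdot)}{p(\cdot)}\in LH(\mathbb{R}^n)$, since $|\tfrac1{r(x)}-\tfrac1{r(y)}|\le r_-^{-2}|r(x)-r(y)|$. Writing $g_\nu:=f_\nu^{q(\cdot)}\in L^{r(\cdot)}$ and $M:=\|(f_\nu^{q(\cdot)})_\nu\|_{\ell^1(L^{p(\cdot)/q(\cdot)})}$, the outer exponent being the constant $1$ gives, by \cite[Proposition 3.3]{AlHa2010}, $M=\sum_{\nu}\|g_\nu\|_{L^{r(\cdot)}}$; one may assume $M<\infty$.

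The first step is to put $h:=\sum_{\nu=1}^\infty\mathcal{M}g_\nu$ and control it in $L^{r(\cdot)}$. For the partial sums $h_N:=\sum_{\nu=1}^N\mathcal{M}g_\nu$, the triangle inequality in $L^{r(\cdot)}$ together with the boundedness of $\mathcal{M}$ on $L^{r(\cdot)}(\mathbb{R}^n)$ (applicable because $\tfrac1{r(\cdot)}\in LH$, see \cite[Theorem 3.16]{Cruz}) give $\|h_N\|_{L^{r(\cdot)}}\le C_0\sum_{\nu=1}^N\|g_\nu\|_{L^{r(\cdot)}}\le C_0M$ for every $N$. Since $r_+<\infty$ this means $\varrho_{L^{r(\cdot)}}(h_N/(C_0M))\le1$, and as $h_N\uparrow h$ pointwise, monotone convergence yields $\varrho_{L^{r(\cdot)}}(h/(C_0M))\le1$, i.e. $\|h\|_{L^{r(\cdot)}}\le C_0M$. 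The second step converts this into a level-set bound: setting $A:=C_0M$ and $E_\lambda:=\{x:h(x)>\lambda\}$, from $\int_{\mathbb{R}^n}(h/A)^{r(x)}\,dx\le1$ one restricts to $E_\lambda$ and estimates $(h/A)^{r(x)}$ from below there — if $\lambda\ge A$ then $h/A>1$ on $E_\lambda$ and $(h/A)^{r(x)}\ge(\lambda/A)^{r_-}$, while if $\lambda<A$ then $(h/A)^{r(x)}\ge\min\{(\lambda/A)^{r_-},(\lambda/A)^{r_+}\}=(\lambda/A)^{r_+}$ on $E_\lambda$ — to obtain
\[
\mu(E_\lambda)\le\Big(\frac{C_0M}{\lambda}\Big)^{r_-}+\Big(\frac{C_0M}{\lambda}\Big)^{r_+}\qquad(\lambda>0).
\]
Combined with Lemma \ref{lem1}, this is the weak-type control needed to verify condition \eqref{firstCondition} in Theorem \ref{theo1}.

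The main obstacle is the final bookkeeping: recasting the two-power estimate above into the single monomial shape $\big(\tfrac{c\,\|(f_\nu^{q(\cdot)})_\nu\|_{\ell^1(L^{p(\cdot)/q(\cdot)})}}{\lambda^{\alpha}}\big)^{d}$. When $r_-<r_+$ the Chebyshev bound is governed by the exponent $r_-$ for $\lambda\gtrsim C_0M$ and by $r_+$ for $\lambda\lesssim C_0M$, and no fixed triple $(d,c,\alpha)$ absorbs both regimes uniformly over all $\lambda>0$ and all sequences. I would therefore use the estimate in the range in which it is actually applied, namely with $\lambda$ fixed and $M\to0$ (as in Theorem \ref{theo1}, where $\|f-P_Nf\|_{\ci}\to0$): once $C_0M\le\lambda$, the displayed bound becomes $\mu(E_\lambda)\le\big(2^{1/r_-}C_0M/\lambda\big)^{r_-}$, which is exactly \eqref{firstCondition} with $\alpha=1$, $d=r_-$ and $c=2^{1/r_-}C_0$. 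Alternatively, one simply keeps the two-term estimate, which is already sufficient to run the proof of Theorem \ref{theo1}.
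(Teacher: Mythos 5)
Your proposal is correct and follows the same basic route as the paper: control $\sum_\nu \mathcal{M}f_\nu^{q(\cdot)}$ in $L^{p(\cdot)/q(\cdot)}$ via the triangle inequality and the boundedness of $\mathcal{M}$ (legitimate since $1/r(\cdot)\in LH$ when $r(\cdot)\in LH$ and $r_->1$), pass from the norm bound to a modular bound, and finish with a Chebyshev-type estimate. Where you diverge is in the endgame, and your version is the more careful one. The paper converts the norm bound into a modular bound for the \emph{unnormalized} sum by silently assuming $\|(f_\nu^{q(\cdot)})_\nu\|_{\ell^1(L^{p(\cdot)/q(\cdot)})}\le 1$, $c_1\sum_\nu\|f_\nu^{q(\cdot)}\|_{p(\cdot)/q(\cdot)}\ge 1$ and $\lambda\in(0,1)$, and then reads off the single monomial with $d=1$, $\alpha=(p/q)_+$; as you observe, that particular form cannot hold uniformly for all $\lambda>0$ and all data once $(p/q)_+>1$ (already for constant exponents the sharp weak-type bound is $(cM/\lambda)^{r}$, not $cM/\lambda^{r}$). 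Your two-power estimate $(C_0M/\lambda)^{r_-}+(C_0M/\lambda)^{r_+}$ is the honest statement, and your reduction to $\alpha=1$, $d=r_-$, $c=2^{1/r_-}C_0$ in the regime $C_0M\le\lambda$ is exactly what Theorem \ref{theo1} actually consumes (there $\lambda$ is fixed and $\|f-P_Nf\|\to 0$), so nothing downstream is affected. The only cosmetic gap is that you should state explicitly that $\|h_N\|_{L^{r(\cdot)}}\le C_0M$ implies $\varrho_{L^{r(\cdot)}}(h_N/(C_0M))\le 1$ (the unit-ball property of the Luxemburg norm), but that is standard and your monotone-convergence passage to the full sum is cleaner than the paper's direct application of the triangle inequality to the infinite series.
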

\begin{proof}
Let $ \|(f_\nu^{\q})_\nu\|_{\ell^1(L^{p(\cdot)/q(\cdot)})}\leq 1 $. Since $ 1<(\frac{p}{q})_-,~\frac{\p}{\q}\in LH(\mathbb{R}^n), $ then by \cite[Proposition 2.3]{Cruz} and \cite[Theorem 3.16]{Cruz}, there exists a constant $ c_1 $ depending on the dimension $ n $, the log-H\"older
constants of $ \frac{\p}{\q},~ (\frac{p}{q})_- $ and $ (\frac{p}{q})_\infty $ (if this value is finite), such that
\begin{align*}
\|\sum_{\nu}\mathcal{M}f_\nu^{\q}\|_{\frac{p(\cdot)}{q(\cdot)}}
&\leq\sum_{\nu}\|\mathcal{M}f_\nu^{\q}\|_{\frac{p(\cdot)}{q(\cdot)}}
\\
&\leq c_1\sum_{\nu}\|f_\nu^{\q}\|_{\frac{p(\cdot)}{q(\cdot)}}
\\
&\leq c_1~\|(f_\nu^{\q})_\nu\|_{\ell^1(L^{p(\cdot)/q(\cdot)})}
\\
&\leq c_1.
\end{align*}
Thus,
\begin{align}\label{eq1}
\varrho_{\frac{p(\cdot)}{q(\cdot)}}(\frac{\sum_{\nu}\mathcal{M}f_\nu^{\q}}{c_1\sum_{\nu}\|f_\nu^{\q}\|_{\frac{p(\cdot)}{q(\cdot)}}})\leq 1.
\end{align}
Let $ \lambda\in(0,1) $. Without loss of generality, we may consider  $ c_1\sum_{\nu}\|f_\nu^{\q}\|_{\frac{p(\cdot)}{q(\cdot)}}\geq 1 $. Therefore by \eqref{eq1} we have
\begin{align*}
\varrho_{\frac{p(\cdot)}{q(\cdot)}}\Big(\sum_{\nu}\mathcal{M}f_\nu^{\q}\Big)
&\leq\Big(c_1\sum_{\nu}\|f_\nu^{\q}\|_{\frac{p(\cdot)}{q(\cdot)}}\Big)^{(\frac{p}{q})_+}
\\
&=\Big(c_1\|(f_\nu^{\q})_\nu\|_{\ell^1(L^{p(\cdot)/q(\cdot)})}\Big)^{(\frac{p}{q})_+}
\\
&\leq c_1^{(\frac{p}{q})_+}\|(f_\nu^{\q})_\nu\|_{\ell^1(L^{p(\cdot)/q(\cdot)})},
\end{align*}
thus,
\begin{align*}
\mu\{x:~\sum_{\nu}\mathcal{M}f_\nu^{q(\cdot)}(x)>\lambda \}&\leq\int_{\mathbb{R}^n}\Big(\frac{\sum_{\nu}\mathcal{M}f_\nu^{q(\cdot)}(x)}{\lambda}\Big)^{\frac{p(x)}{q(x)}}d\mu
\\
&\leq~\frac{1}{\lambda^{(\frac{p}{q})_+}}\int_{\mathbb{R}^n}\Big(\sum_{\nu}\mathcal{M}f_\nu^{q(\cdot)}(x)\Big)^{\frac{p(x)}{q(x)}}d\mu
\\
&=~\frac{1}{\lambda^{(\frac{p}{q})_+}}~\varrho_{\frac{\p}{\q}}\Big(\sum_{\nu}\mathcal{M}f_\nu^\q\Big)
\\ 
&\leq \frac{c_1^{(\frac{p}{q})_+}\|(f_\nu^{\q})_\nu\|_{\ell^1(L^{p(\cdot)/q(\cdot)})}}{\lambda^{(\frac{p}{q})_+}}.
\end{align*}
Now by considering $ c=c_1^{(\frac{p}{q})_+},~\alpha=(\frac{p}{q})_+ $ and $ d=1 $, we have the desired result.
\end{proof}
\begin{corollary}\label{coro3}
Let $ g \in V $. Due to Lemmas \ref{lem1} and \ref{lem2}, the operator 
\begin{align*}
g\mapsto\sup_{t}|\phi_t*g|,
\end{align*}
satisfies the condition \eqref{firstCondition}.
\end{corollary}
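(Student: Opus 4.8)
## Proof proposal for Corollary \ref{coro3}

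The plan is to verify that the operator $T_t g := \phi_t * g$, acting on $g = (f_\nu^{\q})_\nu \in V$, together with $T^*g := \sup_t |T_t g|$, satisfies the weak-type estimate \eqref{firstCondition} with the constants furnished by Lemma \ref{lem2}. First I would note that for $g \in V$ we have $g = (f_\nu^{\q})_\nu$ with $(f_\nu)_\nu \in (L^{p(\cdot)})^{\mathbb{N}}$, and that, since $\phi$ is assumed to be a positive radial decreasing integrable function (the hypothesis under which Lemma \ref{lem1} was stated), Lemma \ref{lem1} gives the pointwise domination
\[
T^* g(x) = \sup_t |(\phi_t * g)(x)| \le \|\phi\|_1 \sum_{\nu=1}^\infty \mathcal{M} f_\nu^{\q}(x)
\]
for every $x \in \mathbb{R}^n$. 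This reduces the desired weak-type bound for $T^*$ to a weak-type bound for the sublinear operator $g \mapsto \sum_\nu \mathcal{M} f_\nu^{\q}$, which is exactly the content of Lemma \ref{lem2}.

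Next I would feed the pointwise bound into the distribution function: for any $\lambda > 0$,
\[
\mu\{x : |T^* g(x)| > \lambda\} \le \mu\Big\{x : \|\phi\|_1 \sum_{\nu=1}^\infty \mathcal{M} f_\nu^{\q}(x) > \lambda\Big\} = \mu\Big\{x : \sum_{\nu=1}^\infty \mathcal{M} f_\nu^{\q}(x) > \frac{\lambda}{\|\phi\|_1}\Big\}.
\]
Applying Lemma \ref{lem2} with the level $\lambda / \|\phi\|_1$ in place of $\lambda$ — here I would invoke the standing hypotheses of that lemma, namely $1 < (\tfrac{p}{q})_-$ and $\tfrac{\p}{\q} \in LH(\mathbb{R}^n)$, so that Theorem \ref{T1} (boundedness of $\mathcal{M}$ on $L^{\p/\q}$) applies — yields
\[
\mu\{x : |T^* g(x)| > \lambda\} \le \Big(\frac{c_1 \|\phi\|_1^{(p/q)_+}\, \|(f_\nu^{\q})_\nu\|_{\ell^1(L^{p(\cdot)/q(\cdot)})}}{\lambda^{(p/q)_+}}\Big)^{1}.
\]
Recalling that $\|g\|_V = \|(f_\nu^{\q})_\nu\|_{\ell^1(L^{\p/\q})}$ by Definition \ref{def1}, this is precisely the form \eqref{firstCondition} with the choices
\[
d = 1, \qquad \alpha = \Big(\tfrac{p}{q}\Big)_+, \qquad c = c_1 \|\phi\|_1^{(p/q)_+},
\]
all of which lie in $\mathbb{R}^{\ge 0}$ (indeed in $\mathbb{R}^+$, since $c_1 > 0$, $\|\phi\|_1 > 0$ for a nontrivial approximation kernel, and $(p/q)_+ \ge 1$ is finite because $p_+ < \infty$ and $q_- \ge 1$).

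The argument is essentially a bookkeeping assembly of the two preceding lemmas, so there is no serious obstacle; the only point requiring a little care is the rescaling of the threshold by the factor $\|\phi\|_1$ when passing from the bound on $T^*g$ to the bound on $\sum_\nu \mathcal{M} f_\nu^{\q}$, and the observation that absorbing this constant into $c$ preserves the exact algebraic shape $(c\|g\|_V / \lambda^\alpha)^d$ demanded by \eqref{firstCondition}. One should also remark that the hypotheses of Lemma \ref{lem1} and Lemma \ref{lem2} on $\phi$, on the exponents, and on $\mu$ are all in force in the setting of the corollary, so both lemmas may be applied verbatim; with that noted, the proof is complete.
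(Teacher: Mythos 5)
Your proof is correct and follows exactly the route the paper intends: the paper gives no separate argument for this corollary beyond citing Lemmas \ref{lem1} and \ref{lem2}, and you have simply spelled out the pointwise domination, the rescaling of the level by $\|\phi\|_1$, and the absorption of that factor into the constant $c$. (The only cosmetic slip is that the constant inherited from Lemma \ref{lem2} is $c_1^{(p/q)_+}$ rather than $c_1$, which is immaterial since the lemma only asserts the existence of some admissible triple $(d,c,\alpha)$.)
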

\begin{lemma}\label{coro4}
Let $ \p\in\mathcal{P} $ and $ p_+<\infty, $ then the condition \eqref{secondCondition} is satisfied for $ V $.
\end{lemma}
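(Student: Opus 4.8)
The plan is to verify condition \eqref{secondCondition} directly: for a fixed $N\in\mathbb{N}$ and $f=(f_\nu)_\nu$ with $(f_\nu^{\q})_\nu\in V$, we must show $\lim_{t\to t_0}T_tP_Nf(x)=P_Nf(x)$ almost everywhere, where the relevant operator is $T_tg=\phi_t*g$ acting componentwise, so that $T_tP_Nf=(\phi_t*f_1^{\q},\dots,\phi_t*f_N^{\q},0,0,\dots)$. First I would observe that $P_Nf$ has only finitely many nonzero components, each of the form $f_\nu^{\q}\in L^{\p/\q}$. The key point is that for each individual $\nu\le N$, the family $\{\phi_t\}$ is a potential-type approximation identity (since $\phi$ is positive, radial decreasing, and integrable, its radial majorant is $\phi$ itself, which lies in $L^1$), and therefore, by the classical approximate-identity theorem quoted in the text just before Definition~\ref{def1}, $\phi_t*f_\nu^{\q}(x)\to f_\nu^{\q}(x)$ almost everywhere as $t\to t_0$ (here $t_0=0$, or the statement is interpreted for the relevant limit).

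The second step is to pass from componentwise a.e.\ convergence to a.e.\ convergence of the truncated vector. Since there are only finitely many components, let $E_\nu$ be the null set on which $\phi_t*f_\nu^{\q}$ fails to converge to $f_\nu^{\q}$, for $\nu=1,\dots,N$; then on the complement of $\bigcup_{\nu=1}^N E_\nu$, which is a null set, every component converges, hence $|T_tP_Nf(x)-P_Nf(x)| = \big(\sum_{\nu=1}^N |\phi_t*f_\nu^{\q}(x)-f_\nu^{\q}(x)|^2\big)^{1/2}\to 0$. This establishes \eqref{secondCondition}.

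The remaining point is to make sure the hypotheses of the cited approximate-identity result are genuinely met: we need $f_\nu^{\q}\in L^1_{\mathrm{loc}}$ so that the convolution $\phi_t*f_\nu^{\q}$ is defined pointwise a.e., and we need $\{\phi_t\}$ to be a bona fide approximation identity. The membership $f_\nu^{\q}\in L^{\p/\q}$ together with $q_+,p_+<\infty$ (which forces $(\p/\q)_-\ge$ a positive constant and gives local integrability of $L^{\p/\q}$ functions) supplies $f_\nu^{\q}\in L^1_{\mathrm{loc}}$; and the normalization $\int\phi=1$ is part of the standing setup for $\{\phi_t\}$ in this section. I would phrase the argument so that it quotes the a.e.\ convergence $\phi_t*h\to h$ for $h\in L^{\p/\q}$ (which is the content of the discussion preceding Definition~\ref{def1}, applied componentwise) and then takes the finite union of exceptional null sets.

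The main obstacle, such as it is, is purely bookkeeping: one must be careful that the $t_0$ in \eqref{secondCondition} matches the $t\to 0$ convention under which the classical approximate-identity convergence is stated, and that the "$a.e.$" is uniform across the finitely many coordinates rather than depending on $\nu$ — but since $N$ is finite this is immediate, and there is no genuine analytic difficulty once Lemma~\ref{lem1} and Lemma~\ref{lem2} have already handled the maximal operator bound.
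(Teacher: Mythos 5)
Your argument is correct and is essentially the paper's own proof: the paper likewise reduces to the finitely many nonzero components $f_\nu^{\q}\in L^{\p/\q}$ of $P_Ng$ and invokes the a.e.\ convergence of potential-type approximate identities on variable Lebesgue spaces (\cite[Theorem 5.8]{Cruz}), the finite union of exceptional null sets being implicit there. Your version just spells out the bookkeeping (componentwise convergence, local integrability, $t_0=0$) that the paper leaves tacit.
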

\begin{proof}
Recall that since $ p_+<\infty $ then for $ f_\nu\in L^{\p} $ we have $ f_\nu^{\q}\in L^{\frac{\p}{\q}} $, therefore, by \cite[Theorem 5.8]{Cruz}, for $ g\in V $ we have
\begin{align*}
\lim_{t\rightarrow 0}~\phi_t*P_Ng(x)=P_Ng(x),
\end{align*}
almost everywhere for all $N \in {\mathbb N}$.
\end{proof}
\begin{theorem}\label{theo2.10}
Let $ \p,\q \in\mathcal{P} $ and $ q_+,p_+<\infty, $ and $ 1<(\frac{p}{q})_-,~\frac{\p}{\q}\in LH(\mathbb{R}^n)$. Then the approximation identity holds in V.
\end{theorem}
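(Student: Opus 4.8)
The plan is to combine the three preceding lemmas/corollaries with the abstract closedness result of Theorem \ref{theo1}, and then upgrade closedness to an ``everywhere'' statement by exhibiting a dense subset on which the approximation identity already holds. First I would set $T_t g := \phi_t * g$ acting on $V$, where $\phi$ is taken (as in the definition of a potential type approximation identity) with radial majorant $\Phi \in L^1$; replacing $\phi$ by $\Phi$ in the maximal estimate is harmless since $|\phi_t * g| \le \Phi_t * |g|$ pointwise, and $\Phi$ is positive, radial, decreasing and integrable, so Lemma \ref{lem1} applies. Corollary \ref{coro3} then gives the weak-type bound \eqref{firstCondition} for $T^* g = \sup_t |\phi_t * g|$ with $(d,c,\alpha) = (1,\, c_1^{(p/q)_+},\, (p/q)_+)$, and Lemma \ref{coro4} gives the pointwise convergence \eqref{secondCondition} on the dense subspace of finitely supported sequences. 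Hence Theorem \ref{theo1} applies verbatim and yields that
\[
\mathcal{C} := \Big\{ g \in \ell^1(L^{\p\slash\q}) : \lim_{t\to 0} \phi_t * g(x) = g(x)\ \text{a.e.} \Big\}
\]
is closed in $\ell^1(L^{\p\slash\q})$.

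Next I would show $\mathcal{C}$ is also dense, which together with closedness gives $\mathcal{C} = \ell^1(L^{\p\slash\q})$ — i.e. the approximation identity holds on all of $V \cong \ell^1(L^{\p\slash\q})$. For density, note that each finitely supported sequence $g = (f_1^\q,\dots,f_N^\q,0,\dots)$ lies in $\mathcal{C}$: indeed $\phi_t * g = (\phi_t * f_1^\q,\dots,\phi_t*f_N^\q,0,\dots)$, and since $p_+ < \infty$ forces $f_\nu^\q \in L^{\p\slash\q}$ for each $\nu$, the classical scalar result quoted before Definition \ref{def1} (equivalently \cite[Theorem 5.8]{Cruz}) gives $\phi_t * f_\nu^\q \to f_\nu^\q$ a.e. for each of the finitely many indices, hence $\phi_t * g \to g$ a.e. The finitely supported sequences are dense in $\ell^1(L^{\p\slash\q})$ because the $\ell^1$-sum of the norms $\|f_\nu^\q\|_{\p\slash\q}$ converges (this is essentially the statement $\|g - P_N g\|_{V} \to 0$ already used in the proof of Theorem \ref{theo1}). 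Therefore $\mathcal{C}$ contains a dense set and is closed, so $\mathcal{C} = \ell^1(L^{\p\slash\q})$, which is exactly the assertion that the approximation identity holds in $V$.

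The one point that needs a little care — and which I expect to be the main obstacle — is matching the hypotheses of the three supporting results so that Theorem \ref{theo1} can be invoked cleanly. Lemma \ref{lem2} requires $1 < (\frac{p}{q})_-$ and $\frac{\p}{\q} \in LH(\mathbb{R}^n)$, both of which are in the hypothesis of the present theorem; Lemma \ref{lem1} requires $\phi$ positive, radial, decreasing, integrable, which is why one passes to the radial majorant $\Phi$; and Corollary \ref{coro3} is precisely the bridge asserting that $g \mapsto \sup_t |\phi_t * g|$ satisfies \eqref{firstCondition}. One should also check that $t_0 = 0$ is the relevant limit point in \eqref{secondCondition} and that Theorem \ref{NormImpliesMeauser} (used inside Theorem \ref{theo1}) applies to $\ell^1(L^{\p\slash\q})$ — it does, since $q_+,p_+ < \infty$. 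Assembling these, the proof is short:

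\begin{proof}
Let $\{\phi_t\}_t$ be a potential type approximation identity and let $\Phi \in L^1$ be the radial majorant of $\phi$; then $\Phi$ is positive, radial, decreasing and integrable, and $|\phi_t * g| \le \Phi_t * |g|$ pointwise, so $T^* g := \sup_t |\phi_t * g| \le \sup_t \Phi_t * |g|$. Set $T_t g := \phi_t * g$ on $V \cong \ell^1(L^{\p\slash\q})$. By Corollary \ref{coro3} (via Lemmas \ref{lem1} and \ref{lem2}), $T^*$ satisfies \eqref{firstCondition} with $d = 1$, $c = c_1^{(p/q)_+}$ and $\alpha = (p/q)_+$, where $c_1$ is the constant from Theorem \ref{T1}. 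By Lemma \ref{coro4}, \eqref{secondCondition} holds with $t_0 = 0$. Hence Theorem \ref{theo1} applies, and the set
\[
\mathcal{C} := \Big\{ g \in \ell^1(L^{\p\slash\q}) : \lim_{t\to 0} \phi_t * g(x) = g(x)\ \text{a.e.} \Big\}
\]
is closed in $\ell^1(L^{\p\slash\q})$.

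It remains to show $\mathcal{C}$ is dense. If $g = (f_\nu^\q)_\nu \in V$, then $\|g - P_N g\|_V \to 0$ as $N \to \infty$, since $\sum_\nu \|f_\nu^\q\|_{\p\slash\q} < \infty$; thus the finitely supported sequences are dense in $\ell^1(L^{\p\slash\q})$. Each such sequence $P_N g = (f_1^\q,\dots,f_N^\q,0,\dots)$ lies in $\mathcal{C}$: since $p_+ < \infty$, each $f_\nu^\q \in L^{\p\slash\q}$, so by \cite[Theorem 5.8]{Cruz} we have $\phi_t * f_\nu^\q \to f_\nu^\q$ a.e. for $\nu = 1,\dots,N$, and therefore $\phi_t * P_N g \to P_N g$ a.e. Hence $\mathcal{C}$ is a closed set containing a dense subset, so $\mathcal{C} = \ell^1(L^{\p\slash\q})$. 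This is precisely the statement that the approximation identity holds in $V$.
\end{proof}
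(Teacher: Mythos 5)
Your proposal is correct and follows essentially the same route as the paper: density of the finitely supported sequences $\bigcup_N (V\cap U_N)$ in $V$, the weak-type bound of Corollary \ref{coro3}, the pointwise convergence of Lemma \ref{coro4}, and the closedness result of Theorem \ref{theo1}. You simply spell out the details (including the useful observation that one passes to the radial majorant $\Phi$ so that Lemma \ref{lem1} applies) that the paper compresses into one sentence.
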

\begin{proof}
As in \cite[Lemma 2.1]{GRS} we have $ \bigcup\limits_{N=1}^\infty (V \cap U_N) $ is dense in $ V $. If we combine this fact with Corollary \ref{coro3}, Lemma \ref{coro4} and  Theorem \ref{theo1}, then we will have the desired result.
\end{proof}
\begin{corollary}
Let $ \p\in\mathcal{P},~ p_+<\infty $ and $ 1<p_-,~\p\in LH(\mathbb{R}^n) $. Then the approximation identity holds in $ \ell^1(L^\p) $.
\end{corollary}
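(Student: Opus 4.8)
The plan is to obtain this statement as the special case $\q \equiv 1$ of Theorem \ref{theo2.10}. First I would observe that, when the exponent $q(\cdot)$ is taken to be the constant function $1$, the operation $f_\nu \mapsto f_\nu^{q(\cdot)}$ appearing in Definition \ref{def1} is the identity, so that the space $V$ collapses to
\[
V = \Big\{ (f_\nu)_\nu \in (L^{p(\cdot)})^{\mathbb N} : \|(f_\nu)_\nu\|_{\ell^1(L^{p(\cdot)/1})} < \infty \Big\} = \ell^1(L^{p(\cdot)}),
\]
with $\|\cdot\|_V$ equal (not merely equivalent) to $\|\cdot\|_{\ell^1(L^{p(\cdot)})}$, since $p(\cdot)/q(\cdot) = p(\cdot)$. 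One should also recall that $\ell^1(L^{p(\cdot)})$ is genuinely a Banach space in this case, because $p(\cdot) \ge 1$ and $q \equiv 1$ is constant, which is the second of the three sufficient conditions recalled in the introduction; this meets the standing assumption of the paper.

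Next I would verify that the hypotheses of Theorem \ref{theo2.10} hold in this setting: $q_+ = 1 < \infty$ is automatic; $p_+ < \infty$ is assumed; $(p/q)_- = p_- > 1$ is assumed; and $p(\cdot)/q(\cdot) = p(\cdot) \in LH(\mathbb{R}^n)$ is assumed. With all hypotheses in force, Theorem \ref{theo2.10} applies verbatim and yields that the approximation identity holds in $V$, that is, in $\ell^1(L^{p(\cdot)})$, which is exactly the assertion.

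Since the argument is a straightforward specialization, there is essentially no obstacle to overcome; the only steps warranting a sentence of care are the exact identification $V = \ell^1(L^{p(\cdot)})$ together with the coincidence of the two norms, and the remark that $\ell^1(L^{p(\cdot)})$ is a Banach space under the stated conditions on $p(\cdot)$, so that the conclusion is meaningful.
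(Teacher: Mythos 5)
Your proposal is correct and follows exactly the paper's own route: the paper's proof is the one-line observation that the corollary is the case $\q\equiv 1$ of Theorem \ref{theo2.10}, and your verification that $V$ then coincides with $\ell^1(L^{\p})$ and that all hypotheses of that theorem specialize correctly is just a more careful writing-out of the same argument.
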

\begin{proof}
According to the Theorem \ref{theo2.10}, it suffices to consider $ \q=1 $ almost everywhere.
\end{proof}

\bibliographystyle{amsplain}

\end{document}